\theoremstyle{plain}
\newtheorem{thm}{Theorem}
\newtheorem{lem}[thm]{Lemma}
\theoremstyle{definition}
\numberwithin{equation}{section}
\newtheorem*{rmk}{Remark}
\newtheorem*{ex}{Example}
\def\N{\mathbb{N}}
\def\L{\mathcal{LH}}
\DeclareMathOperator\exc{exc}
\DeclareMathOperator\cyc{cyc}
\DeclareMathOperator\erec{erec}
\DeclareMathOperator\arec{arec}
\DeclareMathOperator\cpeak{cpeak}
\DeclareMathOperator\cdrise{cdrise}
\DeclareMathOperator\cdfall{cdfall}
\DeclareMathOperator\cval{cval}
\DeclareMathOperator\fix{fix}
\def\upnest{\textrm{unest}}
\def\lownest{\textrm{lnest}}
\mathchardef\mhyphen="2D
\DeclareMathOperator\les{(31\mhyphen 2)}
\DeclareMathOperator\less{(13\mhyphen 2)}
\DeclareMathOperator\res{(2\mhyphen 13)}
\DeclareMathOperator\ress{(2\mhyphen 31)}
\DeclareMathOperator\rar{rar}
 \DeclareMathOperator\Exc{Exc}
 \DeclareMathOperator\Excp{Excp}
  \DeclareMathOperator\Erecp{Erecp}
    \DeclareMathOperator\Erecl{Erecl}
 \DeclareMathOperator\Excl{Excl}
\def\312{\les}
\def\132{\less}
\def\213{\res}
\def\231{\ress}
\DeclareMathOperator\rec{rec}
\DeclareMathOperator\Recp{Recp}
\DeclareMathOperator\Arecp{Arecp}
\DeclareMathOperator\Arec{Arec}
\DeclareMathOperator\Rec{Rec}
\DeclareMathOperator\Erec{Erec}
\DeclareMathOperator\Recl{Recl}
\DeclareMathOperator\Arecl{Arecl}
\DeclareMathOperator\Cyc{Cyc}
\DeclareMathOperator\Rar{Rar}
\DeclareMathOperator\Rarp{Rarp}
\DeclareMathOperator\Rarl{Rarl}
\DeclareMathOperator\Cval{Cval}
\DeclareMathOperator\Cdfall{Cdfall}
\DeclareMathOperator\Cdrise{Cdrise}
\DeclareMathOperator\Cpeak{Cpeak}
\DeclareMathOperator\Fix{Fix}
\def\N{\mathbb{N}}
\def\SS{\mathfrak{S}}
\def\su{\textsc{U}}
\def\sd{\textsc{D}}
\def\la{\textsc{L}_a}
\def\lb{ \textsc{L}_b}
\def\lc{ \textsc{L}_c}
\title[Further  equidistribution of set-valued statistics]{Further equidistribution of set-valued statistics  on permutations}
\author{Jianxi Mao}
\address[Jianxi Mao]{School of Mathematic Sciences, 
 Dalian University of Technology, Dalian 116024, P. R. China}
\email{maojianxi@hotmail.com}
\author[Jiang Zeng]{Jiang Zeng}
\address[Jiang Zeng]{Univ Lyon, Universit\'e Claude Bernard Lyon 1, CNRS UMR 5208, Institut Camille Jordan, 43 blvd. du 11 novembre 1918, F-69622 Villeurbanne cedex, France}
\email{zeng@math.univ-lyon1.fr}
\date{\today}
\begin{document}
\maketitle
\begin{abstract}
We construct  bijections to show that two pairs of sextuple set-valued 
statistics of permutations are equidistributed on symmetric groups.
This extends 
a recent result of Sokal and the second author valid for integer-valued statistics as well as 
a previous result of Foata and Han for  bivariable  set-valued statistics.
\end{abstract}

\section{Introduction}
Equidistribution problems of set-valued statistics on permutations have attracted much attention in recent literature, see \cite{BV17, KL18, PO14, FH09}.
A decade ago, answering a conjecture of Foata and Han,   
Cori~\cite{CO09} proved that
 the number of permutations in the symmetric group $\SS_n$  with $p$ cycles and $q$ left-to-right maxima is equal to the number of permutations in  ${\SS}_n$ with $q$ cycles and $p$ left-to-right maxima. In a follow-up~\cite{FH09} Foata and Han  showed that Cori's result 
 can be further extended to set-valued statistics by using two simple  permutation codings called the $A$-code and the $B$-code.
 Recently Sokal and the second author~\cite{SZ19} have extended Cori's result on integer-valued 
 bi-statistics    to integer-valued  multiple 
 statistics. The purpose of this paper is to show that the latter has also a set-valued analogue using  a classical \emph{Laguerre history} encoding of permutations~\cite{DV94,CSZ97} and two new bijections from 
 Laguerre histories onto themselves.
For a permutation $\sigma=\sigma(1)\cdots \sigma(n)$ of
$12\ldots n$,
 the pair $(i, \sigma(i))$ ($1\leq i\leq n$)
  is called 
 \begin{itemize}
\item  a \emph{record}  of $\sigma$ 
 if $ \sigma (j) < \sigma (i)$ for all $ j < i$;  
\item  an \emph{antirecord}  of $\sigma$
 if $ \sigma (j) > \sigma (i)$ for all $ j > i$; 
\item an \emph{exclusive record}  of $\sigma$
 if it is a record but not an antirecord; 
\item a \emph{record-antirecord}  of $\sigma$
 if it is both a record and an antirecord;
\item  an \emph{excedance} 
 if $ \sigma(i) > i$.
\end{itemize}
 The corresponding numbers of the above five 
 statistics   are denoted by 
  $\rec\sigma$, $\arec\sigma$, $\erec\sigma$, $\rar\sigma$
 and $\exc\sigma$. Moreover, the indices $i$ and $\sigma(i)$ are called 
 \emph{position} and \emph{letter} of the corresponding statistic. 
 The ten sets of corresponding positions and letters  of the above five 
  statistics are denoted, respectively, by 
$\Recp\sigma$, $\Recl\sigma$, 
$\Arecp\sigma$, $\Arecl\sigma$,
$\Erecp\sigma$,   $\Erecl\sigma$, 
$\Rarp\sigma$, $\Rarl \sigma$,
$\Excp\sigma$,   and $\Excl\sigma$.  For convenience we intoduce 
the four  bi-set-valued statistics    
\begin{align*}
\Rec\sigma&=(\Recp\sigma, \Recl\sigma),\quad 
\Arec\sigma=(\Arecp\sigma, \Arecl\sigma)\\
\Erec\sigma&=(\Erecp\sigma, \Erecl\sigma),\quad 
\Exc\sigma=(\Excp\sigma, \Excl\sigma).
\end{align*}
Since  the position and letter of a record-antirecord must 
be equal, we have $\Rar\sigma:=\Rarp\sigma=\Rarl \sigma$.
Furthermore,  if the bijection $i\mapsto \sigma(i)$ ($1\leq i\leq n$) has 
 $r$ disjoint cycles, whose \emph{maximum elements} are $c_1, \ldots, c_r$, we set
 $\Cyc\sigma:=\{c_1, c_2, \ldots, c_r\}$,
 and an index $i\in [n]:=\{1, \ldots, n\}$ is called a
 \begin{itemize}
\item \emph{cycle peak}  of $\sigma$ if $ \sigma ^{-1}(i) < i >\sigma (i); $ 
\item  \emph{cycle valley}  of $\sigma$   if $ \sigma ^{-1}(i) > i < \sigma (i); $ 
\item \emph{cycle double rise}  of $\sigma$  if $ \sigma ^{-1}(i) < i < \sigma (i);$ 
\item \emph{cycle double fall}  of $\sigma$  if $ \sigma ^{-1}(i) > i > \sigma (i);$
\item  \emph{fixed point}  of $\sigma$  if $ \sigma (i)=i$.
\end{itemize}
The corresponding sets (resp. numbers) of  the above statistics are denoted  by 
$\Cpeak\sigma$, $\Cval\sigma$, $\Cdrise\sigma$, $\Cdfall\sigma$ and 
$\Fix\sigma$ (resp. 
$\cpeak\sigma$, $\cval\sigma,\, \cdrise\sigma, \,\cdfall\sigma$ and $\fix\sigma$), respectively.  The following is our running example in this paper.

\begin{ex}
Consider the permutation $\sigma\in\SS_{17}$ given by
$$\sigma=\left(%
\begin{array}{cccccccccccccccccc}
1 & 2 & 3 & 4 & 5 & 6 & 7 & 8 & 9 & 10 & 11 & 12 & 13 & 14 & 15 & 16 & 17 \\
4 & 9 & 2 & 11 & 5 & 10 & 1 & 3 & 6 & 8 & 7 & 12 & 16 & 17 & 13 & 14 & 15  \\
\end{array}%
\right).$$
Then
\begin{align*}
\Rec\sigma&=(\{1, \, 2, \,4, \,12, \,13, \,14\}, \, \{4, \,9, \,11, \,12, \,16, \,17\})\\
\Arec\sigma&=(\{7, \,8,  \,9, \,11, \,12, \,15, \,16, \,17\},\,
\{1, \,3, \,6, \,7, \,12, \,13, \,14, \,15\})\\
\Erec\sigma&=(\{1, \,2, \,4, \,13, \,14\},\,\{4, \,9, \,11, \,16, \,17\})\\
\Exc\sigma&=(\{1, \,2, \,4, \,6, \,13, \,14\},\, \{4, \,9, \,10, \,11, \,16, \,17\})
\end{align*}
and $\Rar\sigma=\{12\}$.
The cycle decomposition of $\sigma$   reads
\begin{equation*}
\sigma=(1,4,11,7)\,(2,9,6,10,8,3)\,(5)\,(12)\,(13,16,14,17,15),
\end{equation*}
thus
$\Cyc\sigma=\{5,10,11,12,17\}$, 
$\Cpeak\sigma=\{9,10,11,16,17\}$,
$\Cval\sigma=\{1,2,6,13,14\}$,
$\Cdrise\sigma=\{4\}$,
$\Cdfall\sigma=\{3,7,8,15\}$,  and 
$\Fix\sigma=\{5,12\}$.  
\end{ex}  

In a recent work~\cite{SZ19}   Sokal and the second author
studied the  polynomials 
$\mu_n(x,y,u,v)$ in the Taylor expansion of  the Stieltjes continued fraction 
\begin{align}
\sum_{n\geq 0}\mu_n(x,y,u,v)t^n
=\cfrac{1}{1-\cfrac{x\,t}
{1-\cfrac{y\, t}
{1-\cfrac{(x+u)\,t}{1-\cfrac{(y+v)\,t}{\cdots}}}}},
\end{align}
where the  coefficients $\alpha_k$ ($k\geq 1$)  are defined by
\begin{align*}
\alpha_{2k-1}=x+(k-1)u,\quad
\alpha_{2k}=y+(k-1)v.
\end{align*}
They~\cite[Theorem 2.2]{SZ19}  showed that  the  polynomial $\mu_n$ has 
the two interpretations
\begin{align}
\mu_n(x,y,u,v)&=
\sum_{\sigma\in \SS_n} x^{\arec(\sigma)}
 y^{\erec(\sigma)}u^{n-\exc(\sigma)-\arec(\sigma)}v^{\exc(\sigma)-\erec(\sigma)}\label{eq:arec}\\
 &=\sum_{\sigma\in \SS_n} x^{\cyc(\sigma)}
 y^{\erec(\sigma)}u^{n-\exc(\sigma)-\cyc(\sigma)}v^{\exc(\sigma)-\erec(\sigma)},
 \label{eq:cyc}
\end{align}
where 
$\cyc\sigma$ is the cycle number of $\sigma$.
Besides,
they~\cite[Theorem 2.4]{SZ19}   proved 
the Jacobi continued fraction  expansion
\begin{align}\label{cyc-arec-sym}
&\sum_{n=0}^\infty \left(\sum_{\sigma\in \SS_n} x^{\cyc(\sigma)} y^{\arec(\sigma)} z^{\exc(\sigma)} w_0^{\rar\sigma} \right)t^n=\\
&\cfrac{1}{1-xyw_0\, t-\cfrac{xyz\,t}{1-( x+y+z)\,t-\cfrac{(x+1)(y+1)z\;t}{1-\cdots}}},\nonumber
\end{align}
where  the coefficients $\gamma_k$ ($k\geq 0$)
and $\beta_k$ ($k\geq 1$) are defined by $\gamma_0=xyw_0$,
\begin{align*}
\gamma_n=x+y +n-1+nz,\quad 
\beta_n=(x+n-1) (y+n-1)  \quad \textrm{for}\quad  n\geq 1.
\end{align*}
It follows from \eqref{eq:arec} and \eqref{eq:cyc} that the two triple integer-valued statistics 
\begin{align}\label{erec-cyc-exc}
(\erec, \cyc, \exc)\quad  \textrm{and}\quad  (\erec, \arec,\exc)
\end{align}
are equidistributed on $\SS_n$. As 
the right-hand side of \eqref{cyc-arec-sym}  is symmetric under $x\leftrightarrow y$,  the two 
triple integer-valued statistics 
\begin{align}\label{cyc-arec-exc}
( \cyc, \arec, \exc)\quad  \textrm{and}\quad  (\arec, \cyc,\exc)
\end{align} 
are also equidistributed on $\SS_n$.
Note that  the symmetry of  the integer valued statistics 
$(\arec, \cyc)$ is  due to Cori~\cite{CO09}. 
Motivated by the set-valued analogue of Cori's result~\cite{FH09}, we shall prove the following 
 set-valued analogue of \eqref{erec-cyc-exc} and \eqref{cyc-arec-exc}.
 \begin{thm}
The two sextuple set-valued statistics 
$$(\Cyc, \Erec, \Exc, \Rar)\quad \text{and}\quad 
(\Arecp, \Erec, \Exc, \Rar)
$$ 
are equidistibuted on $\SS_n$.
	\end{thm}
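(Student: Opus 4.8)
The plan is to realize both sextuples as statistics on a single combinatorial model---\emph{Laguerre histories}---on which the ``first coordinate'', $\Cyc$ or $\Arecp$, becomes an explicit label pattern, and then to pass from one pattern to the other by bijections internal to that model. We first recall the classical encoding $\Psi\colon\SS_n\to\L_n$ of permutations by Laguerre histories \cite{DV94,CSZ97}: a history of length $n$ is a Motzkin-type path $w=w_1\cdots w_n$ from $(0,0)$ to $(n,0)$ staying weakly above the axis, with steps of five kinds---up $\su$, down $\sd$, level-rise $\la$, level-fall $\lb$, level-loop $\lc$---each bearing a label that records a nesting number and ranges over an integer interval whose length is governed by the current height (so a loop at height $0$ bears a forced label). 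Under $\Psi$ the step $w_i$ is $\su,\sd,\la,\lb,\lc$ according as $i$ is a cycle valley, cycle peak, cycle double rise, cycle double fall, or fixed point of $\sigma$; in particular the step shape of $\Psi(\sigma)$ alone determines $\Excp\sigma$ (indices of $\su$- and $\la$-steps) and $\Excl\sigma$ (indices of $\sd$- and $\la$-steps), and the loops at height $0$ are exactly the members of $\Rar\sigma$.

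The next, and conceptually heaviest, step is to build a dictionary for the remaining statistics. For $\Erec$ and $\Cyc$ this rests on the \emph{local} nature of $\Psi$: since an exclusive record is precisely a record that is an excedance (so that $\rec=\erec+\rar$), one reads off $\Erecp\sigma$ and $\Erecl\sigma$ by testing, for each ascending resp.\ descending excedance-step, whether it carries the extremal, ``record-type'', label; and $\Cyc\sigma$ consists of all level-loop steps together with those down steps carrying the extremal label (the ones completing a cycle). The genuinely new ingredient---and the step I expect to be the main obstacle---is a Laguerre-history description of $\Arecp\sigma$. Whereas $\Excp,\Excl,\Erec,\Cyc$ are governed at index $i$ by the triple $(\sigma^{-1}(i),i,\sigma(i))$ together with a single nesting count, so are transparent in the left-to-right construction of $\Psi$, the antirecord condition at $i$ constrains \emph{all} of $\sigma(j)$ with $j>i$; extracting it from $\Psi$ calls for a careful global analysis, after which one expects $\Arecp\sigma$ to be cut out of the step shape by label patterns as well---but patterns carried by the $\sd$- and $\lb$-steps, and reading loops differently (only the ground-level ones), so that the resulting pattern is a genuine twist of the one describing $\Cyc\sigma$.

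Granting this dictionary, the theorem becomes a statement internal to $\L_n$: the tuple (step shape, $\Cyc$-pattern, $\Erec$-patterns, $\Rar$) and the tuple (step shape, $\Arecp$-pattern, $\Erec$-patterns, $\Rar$) are equidistributed. I would prove this by exhibiting a bijection $\Theta\colon\L_n\to\L_n$ carrying the first datum to the second, and two observations organize its construction. First, $\Cyc,\Arecp,\Erec$ and $\Exc$ are all additive over the direct-sum decomposition of $\sigma$---equivalently over the factorization of $w$ at its returns to height $0$, which is precisely where the $\Rar$-loops sit---so it suffices to define $\Theta$ arch by arch, as the identity on the trivial (length-one) arches. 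Second, fixing $\Excp$ and $\Excl$ only pins down the $\su$-, $\sd$- and $\la$-steps together with the whole height profile, so $\Theta$ remains free to trade $\lb$-steps for $\lc$-steps (note that $\fix$ is \emph{not} among the six statistics) and to reshuffle labels. Within one arch I would then assemble $\Theta$ as a composite $\Theta=\Theta_2\circ\Theta_1$ of two such moves: $\Theta_1$ acting on the labels and types of the level steps, $\Theta_2$ on the labels of the descending steps, each arranged to interchange the ``outermost-closing'' and the ``record-type'' roles of the extremal labels while leaving the ascending steps---hence $\Erecp$---and the ground-level loops---hence $\Rar$---intact.

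It remains to verify that $\Theta$ behaves as advertised: that $\Theta_1,\Theta_2$ are well-defined bijections with explicit inverses; that $\Theta$ preserves the step shape up to the $\lb\leftrightarrow\lc$ exchange (hence $\Exc$), preserves both $\Erec$-patterns and the ground-level loops (hence $\Erec$ and $\Rar$), and sends the $\Cyc$-pattern to the $\Arecp$-pattern exactly as the dictionary of the second step demands. The delicate points are, again, the behaviour near height $0$, where a careless relabelling or type-change would create or destroy a ground-level loop and corrupt $\Rar$, and the compatibility of the $\lb\leftrightarrow\lc$ exchange with the antirecord dictionary; and the whole argument is only as solid as the correct form of that dictionary, which is where the essential work lies and where this approach goes beyond \cite{SZ19} (continued fractions only) and the bivariate treatment of \cite{FH09}.
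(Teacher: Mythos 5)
Your overall architecture is exactly the paper's: encode $\SS_n$ by Laguerre histories and then build a bijection of $\L_n$ onto itself that trades the pattern describing the first coordinate while fixing the data for $\Erec$, $\Exc$, $\Rar$. But the substance of your dictionary is inverted, and where you commit to specifics they fail for this encoding. The statistic you single out as the main obstacle, $\Arecp$, is in fact the trivial part: $i\in\Arecp\sigma$ iff its lower-nesting number vanishes, i.e.\ iff the step $s_i\in\{\sd,\lb,\lc\}$ carries the label $\eta_i=1$ (the paper's Lemma 3). The genuinely hard statistic is $\Cyc$: a down step closes a cycle iff the chain $i\to i_1',\,i_1\to i_2',\dots$ in the partially built bipartite digraph returns to $i'$, and this is \emph{not} detected by any extremal label; the ``cycle-closing'' value of $\eta_i$ is a history-dependent index $\eta_i^*$ (the rank of the vacant bottom vertex ending the chain), generically neither $1$ nor $h_{i-1}$. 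Consequently your rule ``interchange the outermost-closing and record-type roles of the extremal labels'' on descending steps is not a well-defined map, and it is precisely here that the real work lies: the paper keeps $\xi_i$ fixed, sends $\eta_i=1$ to $\eta_i'=\eta_i^*$ with a three-case shift for the remaining labels, and proves bijectivity by exhibiting the inverse through the same digraph bookkeeping (its Lemmas 5, 7, 8). Your level-step move ($\lb\leftrightarrow\lc$ exchange keyed on $\eta_i=1$, preserving ground-level loops and hence $\Rar$) does match the paper's cases (iii)--(iv).

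A second gap: $\Erecp$ is not local in this encoding. Up steps carry no labels, so $\Erecp$ cannot be read off by testing ascending steps for a ``record-type'' label; it must be recovered globally as the set of positions joined to the letters of $\Erecl$ in the digraph. Hence ``leaving the ascending steps intact'' does not by itself preserve $\Erecp$: one needs an argument that the unchanged $\xi$-labels together with the preserved excedance position and letter sets force the same connections in the old and new digraphs, which the paper does by induction on the vacant-vertex sets at successive excedance letters (its Lemma 9). Your arch-by-arch reduction at returns to height $0$ is legitimate but does not lessen either difficulty, since both the $\Cyc$ description and the $\Erecp$ preservation are global within an arch. As it stands, the proposal names the right framework but leaves unproved (and partly misstates) the two lemmas on which the theorem actually rests.
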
                                                         

\begin{thm}
The two quintuple set-valued statistics 
$$(\Cyc, \Arecp, \Exc, \Rar)\quad \text{and}\quad
(\Arecp, \Cyc, \Exc,\Rar)$$
 are equidistributed on $\SS_n$.                                                  
\end{thm}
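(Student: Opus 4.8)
The plan is to realize, at the level of sets, the $x\leftrightarrow y$ symmetry of the right-hand side of \eqref{cyc-arec-sym}, working inside the model of Laguerre histories $\L_n$. Recall the classical bijection $\Psi\colon\SS_n\to\L_n$ encoding a permutation $\sigma$ as a labelled Motzkin path whose $i$-th step is an up-step, a down-step or a level step according as $i$ is a cycle valley, a cycle peak, or one of cycle double rise / cycle double fall / fixed point, the labels recording nesting data, and whose edge weights reproduce $\gamma_0=xyw_0$, $\gamma_h=x+y+(h-1)+hz$ and $\beta_h=(x+h-1)(y+h-1)$. The first task is to locate the five statistics on $\Psi(\sigma)$. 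Directly from the definitions: $i\in\Excp\sigma$ iff step $i$ is an up-step or a double-rise step; $i\in\Excl\sigma$ iff step $i$ is a down-step or a double-rise step; $\Fix\sigma$ is the set of fixed-point steps; and $\Rar\sigma$ is the set of fixed-point steps occurring at height $0$ (having height $0$ before step $i$ says exactly that $\sigma$ stabilises $\{1,\dots,i-1\}$, which together with $\sigma(i)=i$ is the strong fixed point condition). For $\Cyc\sigma$ one uses that a cycle maximum is either a fixed point or a cycle peak and that a cycle peak is the maximum of its cycle precisely when its label takes its extremal value, so $\Cyc\sigma$ is the set of fixed-point steps together with the $x$-marked down-steps. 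For $\Arecp\sigma$ one first notes that no excedance position is an antirecord --- if $\sigma(i)>i$ the $n-i$ values $\sigma(i+1),\dots,\sigma(n)$ cannot all exceed $\sigma(i)$ since only $n-\sigma(i)<n-i$ values do --- and that a fixed point is an antirecord only when strong; hence $\Arecp\sigma$ is supported on the down-steps, the double-fall steps and the height-$0$ fixed-point steps, and in fact equals the set of height-$0$ fixed-point steps together with the $y$-marked down-steps and the $y$-marked double-fall steps. The substance here is that $\Psi$ can be normalised so that the descriptions of $\Cyc\sigma$ and of $\Arecp\sigma$ become exact $x\leftrightarrow y$ mirror images of one another; the bookkeeping check is that $\sum_{\sigma\in\SS_n}x^{|\Cyc\sigma|}y^{|\Arecp\sigma|}z^{|\Excp\sigma|}w_0^{|\Rar\sigma|}$ reproduces \eqref{cyc-arec-sym}.

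Granting this dictionary, I would define $\omega\colon\L_n\to\L_n$ as follows: $\omega$ keeps the underlying Motzkin path; it fixes every label on up-steps, on double-rise steps, and on the unique height-$0$ level step; on each down-step at height $h$ --- whose label, because $\beta_h$ factors as $(x+h-1)(y+h-1)$, splits into an $x$-coordinate recording membership in $\Cyc$ and a $y$-coordinate recording membership in $\Arecp$ --- it transposes the two coordinates; and on each level step of height $h\ge 1$ --- where $\gamma_h=x+y+(h-1)+hz$ distinguishes a single $x$-marked value (a fixed point, in $\Cyc$ but not $\Arecp$) and a single $y$-marked value (an antirecord double fall, in $\Arecp$ but not $\Cyc$) --- it interchanges just those two values, fixing the other $2h-1$. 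In every case $\omega$ preserves whether a step is an excedance step and whether it is a height-$0$ fixed-point step, so $\Exc$ and $\Rar$ are left intact, whereas it exchanges the $x$-marked and $y$-marked labels step by step. Being a label-wise involution, $\omega$ is a bijection, so $\Phi:=\Psi^{-1}\circ\omega\circ\Psi$ is a bijection of $\SS_n$; and one checks, position by position, that $\Cyc(\Phi\sigma)=\Arecp\sigma$, $\Arecp(\Phi\sigma)=\Cyc\sigma$, $\Exc(\Phi\sigma)=\Exc\sigma$ and $\Rar(\Phi\sigma)=\Rar\sigma$. Over $\SS_n$ this gives $(\Cyc,\Arecp,\Exc,\Rar)\circ\Phi=(\Arecp,\Cyc,\Exc,\Rar)$, which is the claim.

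The step I expect to be the real obstacle is the part of the dictionary concerning $\Arecp$. Membership in $\Cyc$ is manifestly a local feature of $\Psi(\sigma)$, whereas ``$i$ is an antirecord'' is a global condition on $\sigma$; recasting it as a condition on individual label values --- and, crucially, as precisely the $x\leftrightarrow y$ image of the description of $\Cyc$ --- is what the construction and normalisation of $\Psi$ (carried out in the preliminary sections) must accomplish, and it is exactly what forces $\omega$ to transport the \emph{set} $\Arecp\sigma$ onto the \emph{set} $\Cyc\sigma$ and not merely onto some set of the same cardinality. A second point worth noting is that this $\omega$ does not preserve $\erec$ and may even change $\fix$, so it is a genuinely different bijection from the one behind the first theorem above, and Theorem~2 does not follow formally from it; consequently one must verify independently that the asymmetric recipes $\omega$ applies on down-steps versus on height-$\ge 1$ level steps still leave $\Exc$ and $\Rar$ untouched --- which, once the count of fixed-point steps at heights $\ge 1$ and of antirecord double falls is seen to be swapped by $\omega$ while the height-$0$ fixed-point steps are kept fixed, is what makes the whole scheme run.
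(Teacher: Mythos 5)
Your overall strategy coincides with the paper's: encode by Laguerre histories, keep the underlying 3-Motzkin word, and apply a label-rewriting involution that exchanges the $x$-marked and $y$-marked data while leaving $\Exc$ and the height-$0$ fixed points (hence $\Rar$) untouched; your treatment of the level steps (swap the fixed-point step with the double-fall step carrying $\eta_i=1$, fix everything else) is exactly the paper's cases (iii)--(iv) of $\rho_2$. But there is a genuine gap at the down steps, and it sits precisely where you claim there is none. You assert that membership in $\Cyc$ is ``manifestly a local feature'' of the history (the $x$-marked down-steps), while $\Arecp$ is the global one. In the encodings at hand it is the other way around: an antirecord position is exactly a step with $\lownest(i,\sigma)=0$, i.e.\ $\eta_i=1$, which is local; whereas a cycle peak $i$ is a cycle maximum iff the partial path through $i$ in the restricted bipartite digraph closes into a cycle, i.e.\ iff $\xi_i$ equals a history-dependent value $\xi_i^*$ that also depends on $\eta_i$ (see \eqref{def:cyc} and Lemma~\ref{cycle pattern}; the paper explicitly remarks that cycle maxima are \emph{not} readable from the Laguerre history labels alone, which is why it works with the digraphs $g_i$). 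The factorization $\beta_h=(x+h-1)(y+h-1)$ in \eqref{cyc-arec-sym} only tells you that for each choice of $\eta_i$ there is exactly one cycle-closing choice of $\xi_i$; it does not make the cycle-closing labels a fixed coordinate value. Consequently your $\omega$, which plainly transposes the two coordinates of a down-step label, does not carry the \emph{set} $\Cyc$ onto the \emph{set} $\Arecp$: it gets the generating function (the already-known integer-valued statement) right, but not the set-valued claim.

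The missing content is exactly the ``normalisation'' you invoke in one sentence: one must replace the absolute nesting label by a relative one in which the cycle-closing value is moved to a distinguished slot, and this is what the paper's case (v) of $\rho_2$ (\S~\ref{def:rho2}) does, via the case analysis \eqref{V2-xi}--\eqref{V2-eta}: $\eta_i'$ is obtained from $\xi_i$ by sending the special value $\xi_i^*$ (cycle closure in $g_i$) to $1$ (the antirecord value) and shifting the smaller values, and $\xi_i'$ is obtained from $\eta_i$ by the inverse operation computed in the \emph{image} digraph $g_{i-1}'$ (the value $\eta_i^*$). Proving that this is well defined, is an involution, and swaps $\Cyc$ and $\Arecp$ step by step is the real work (Lemmas~10 and~\ref{rho2:properties}); none of it is supplied, or even correctly located, by your ``transpose the coordinates'' recipe. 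So the proposal is not a proof as it stands: the dictionary for $\Cyc$ must be established (and, for the natural nesting labelling, your stated form of it is false), and the down-step rule must be corrected to the history-dependent conjugation before the rest of your argument --- which is sound for level steps, $\Exc$, and $\Rar$ --- can go through.
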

As $\Rec=(\Erecp\cup\Rar,\Erecl\cup\Rar) $, 
we derive immediately that the following two pairs of triple set-statistics 
\begin{align}
	(\Cyc, \Rec,\Exc)\quad \text{and}\quad
	(\Arecp,  \Rec,\Exc)\label{han-strong},\\
	(\Cyc, \Arecp,\Exc)\quad \text{and}\quad
	(\Arecp,  \Cyc,\Exc)
\end{align}
are equidistributed on $\SS_n$.	  We note  that \eqref{han-strong} is a stronger version 
of Han's conjecture~\cite{Han19}, i.e., 
$(\cyc,  \Recp,\Excp)$ and  $(\arec,  \Recp, \Excp)$
are equidistributed on $\SS_n$ and  the bijections in \cite{CO09,FH09} do not 
keep track of excedances.

A 3-\emph{Motzkin word} of length $n$   is a word $s:=s_1\ldots s_i$ on the alphabet $\{\su, \sd, \la, \lb, \lc\}$
such that  $|s_1\ldots s_n|_\su= |s_1\ldots s_n|_\sd$ and 
\begin{equation}\label{heigt-condition}
  h_{i}:=|s_1\ldots s_i|_\su-|s_1\ldots s_i|_\sd\geq 0 \quad (i=1, \ldots, n),
\end{equation}
where $|s_1\ldots s_i|_{ \textbf{a}}$ is the number occurences of letter \textbf{a} in the word $s_1\ldots s_i$.   
A \emph{Motzkin path} of length $n$ is a sequence of points 
$\omega=(\omega_0,\omega_1,\ldots,\omega_{n-1},\omega_{n})$ 
in $\N\times \N$, 
starting from $\omega_0=(0,0)$ and  
ending at $\omega_{n}=(n,0)$, such that each step $s_i:=(\omega_{i-1}, \omega_i)$ is of type  \emph{up} if  
$\omega_i-\omega_{i-1}=(1,1)$, or type \emph{down} if  $\omega_i-\omega_{i-1}=(1,-1)$, or type
\emph{level} if $\omega_i-\omega_{i-1}= (1,0)$ for $i=1, \ldots, n$.
Clearly we can depict a Motzkin word  with a Motzkin path 
 with three types of level steps.  A \emph{Laguerre history} of length  $n$ 
is  a pair $(s, \gamma)$, where $s$ is 
a 3-Motzkin word $s:=s_1\ldots s_n$ with $h_n=0$ (i.e., $|s]_{\su}=|s]_{\sd}$)
and $\gamma=(\gamma_1, \ldots, \gamma_n)$ is a sequence satisfying the following:  
\begin{itemize}
\item $\gamma_i=(\xi_i, \eta_i)=(\Delta, \Delta)$ if $s_i=\su$,
\item $\gamma_i=(\xi_i, \eta_i)\in \{1,\ldots, h_{i-1}\}^2$ if $s_i=\sd$, 
\item $\gamma_i=(\xi_i, \eta_i)\in \{1,\ldots, h_{i-1}\}\times \{\Delta\}$ if $s_i=\la$,
\item $\gamma_i=(\xi_i, \eta_i)\in  \{\Delta\}\times \{1,\ldots, h_{i-1}\}$ if $s_i=\lb$,
\item $\gamma_i=(\xi_i, \eta_i)=(\Delta, h_{i-1}+1)$  if $s_i=\lc$.
\end{itemize}
Let $\L_n$ be the set of Laguerre histories of length $n$.
 There are several  well-known related such bijections 
between $\SS_n$ and 
$\L_n$, see \cite{Bi93, DV94, CSZ97} and references therein. We shall present  a variant of such encoding $\theta$, which is very close to Biane's version~\cite{Bi93}.    
 Our  strategy  is to first encode permutations using Laguerre 
 histories and then  build bijections $\rho_i$  on  the latter,
 where 
$\rho_i$   is the  bijection on $\L_n$ used in the proof of Theorem~$i$ 
($i=1, 2$). In otherwords, we have the following diagram
 $$
{\SS_n}\quad \xlongrightarrow{\theta }\quad{\L_n}\quad
\xlongrightarrow{\rho_i}\quad{\L_n}\quad
\xlongrightarrow{\theta^{-1}}\quad{\SS_n}.
$$


The paper is organized as follows: in Section~2 we present  permutation code, i.e., 
a bijection $\theta$ between $\SS_n$ and $\L_n$ with the main properties.
In Section~3 we construct the bijection $\rho_1$ from $\L_n$ onto itself
 and prove Theorem~1
by composing $\theta$ and $\rho_1$. In Section~4 we construct the bijection 
$\rho_2$ from $\L_n$ onto itself and prove Theorem~2
by composing $\theta$ and $\rho_2$.
\section{Encoding permutations by Laguerre histories}       
\begin{figure}[t]\label{fig1}
	\begin{center}
		\begin{tikzpicture}[scale=0.9]
		\draw[step=1cm, gray, thick, dotted] (0, 0) grid (17,4);
		
		\draw[black] (0, 0)--(1, 1)--(2, 2)--(3, 2)--(4, 2)--(5, 2)--(6, 3)--(7, 3)--(8, 3)--(9, 2)--(10, 1)--(11, 0)--(12, 0)--(13, 1)--(14, 2)--(15, 2)--(16,1)--(17,0);
		\draw[black] (0,0) node {$\bullet$};
		\draw[black] (1,1) node {$\bullet$};
		\draw[black] (2,2) node {$\bullet$};
		\draw[black] (3,2) node {$\bullet$};
		\draw[black] (4,2) node {$\bullet$};
		\draw[black] (5,2) node {$\bullet$};
		\draw[black] (6,3) node {$\bullet$};
		\draw[black] (7,3) node {$\bullet$};
		\draw[black] (8,3) node {$\bullet$};
		\draw[black] (9,2) node {$\bullet$};
		\draw[black] (10,1) node {$\bullet$};
		\draw[black] (11,0) node {$\bullet$};
		\draw[black] (12,0) node {$\bullet$};
		\draw[black] (13,1) node {$\bullet$};
		\draw[black] (14,2) node {$\bullet$};
		\draw[black] (15,2) node {$\bullet$};
		\draw[black] (16,1) node {$\bullet$};
		\draw[black] (17,0) node {$\bullet$};
		
		\draw [-,thin, blue](3,2)--(4,2);
		\draw [-,thin, black](2,2)--(3,2);
		\draw [-,thin, black](6,3)--(7,3);
		\draw [-,thin, black](7,3)--(8,3);
		\draw [-,thin, black](14,2)--(15,2);
		\draw [-,thin, red](4,2)--(5,2);
		\draw [-,thin, red](11,0)--(12,0);
		\tiny{
			\draw[black] (0.5,-0.5) node {$(\Delta,\Delta)$};
			\draw[black] (1.5,-0.5) node {$(\Delta,\Delta)$};
			\draw[black] (2.5,-0.5) node { $(\Delta,2)$};
			\draw[black] (3.5,-0.5) node {$(1,\Delta)$};
			\draw[black] (4.5,-0.5) node {$(\Delta,3)$};
			\draw[black] (5.5,-0.5) node {$(\Delta,\Delta)$};
			\draw[black] (6.5,-0.5) node {$(\Delta,1)$};
			\draw[black] (7.5,-0.5) node {$(\Delta,1)$};
			\draw[black] (8.5,-0.5) node {$(1,1)$};
			\draw[black] (9.5,-0.5) node {$(2,2)$};
			\draw[black] (10.5,-0.5) node {$(1,1)$};
			\draw[black] (11.5,-0.5) node {$(\Delta,1)$};
			\draw[black] (12.5,-0.5) node {$(\Delta,\Delta)$};
			\draw[black] (13.5,-0.5) node {$(\Delta,\Delta)$};
			\draw[black] (14.5,-0.5) node {$(\Delta,1)$};
			\draw[black] (15.5,-0.5) node {$(1,1)$};
			\draw[black] (16.5,-0.5) node {$(1,1)$};
			
			\draw[black] (2.5,2.5) node {$L_b$};
			\draw[black] (3.5,2.5) node {$L_a$};
			\draw[black] (4.5,2.5) node {$L_c$};
			\draw[black] (6.5,3.5) node {$L_b$};
			\draw[black] (7.5,3.5) node {$L_b$};
			\draw[black] (11.5,0.5) node {$L_c$};
			\draw[black] (14.5,2.5) node {$L_b$};
		}	
		\end{tikzpicture}
		\caption{The Laguerre history of  $\sigma=(1,4,11,7)(2,9,6,10,8,3)(5)(12)(13,16,14,17,15)$}
	\end{center}
\end{figure}
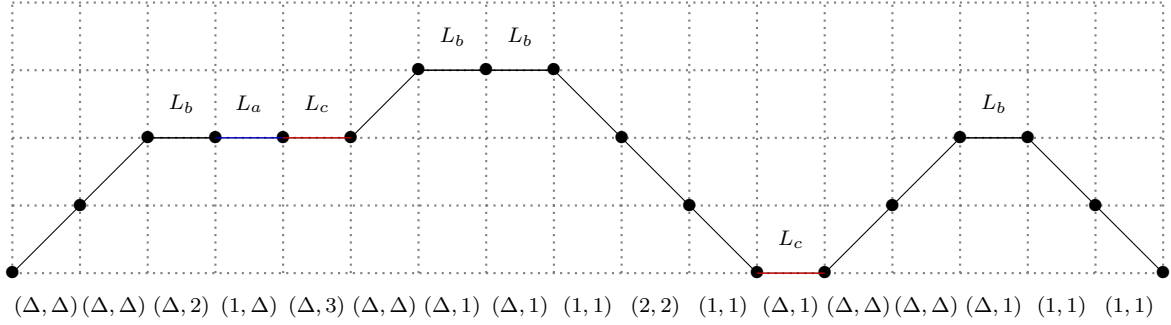 

 For $\sigma \in \SS_n$ and $i\in [n]$,  the refined \emph{lower-nesting} and \emph{upper-nesting} numbers (see \cite{SZ19}) are defined by 
\begin{align*}
\lownest(i,\sigma)&=\#\{j\in [n]: \,\sigma(j)< \sigma(i)\; \textrm{and}\; i< j\},\\
\upnest(i,\sigma)&=\#\{j\in [n]:\,\sigma^{-1}(j)< \sigma^{-1}(i)\; \textrm{and}\;i < j\}.
\end{align*}
Note that $\upnest(i,\sigma)=\lownest(i,\sigma^{-1})$.

\begin{lem}
	Given $\sigma\in\SS_n$ and $i\in [n]$,
	we have 
	\begin{enumerate} 
		\item  $i\in \Arecp\sigma$ if and only if  $\lownest(i,\sigma)=0$,
		\item  $i\in \Recl\sigma$ if and only if  $\upnest(i,\sigma)=0$.
	\end{enumerate}
\end{lem}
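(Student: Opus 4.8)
The plan is to obtain both statements by directly unwinding the definitions, using the involution $\sigma\mapsto\sigma^{-1}$ to reduce (2) to (1).

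For part (1), I would simply read off the definition: $i\in\Arecp\sigma$ means that $(i,\sigma(i))$ is an antirecord, i.e. $\sigma(j)>\sigma(i)$ for every $j>i$. This is precisely the assertion that there is no index $j$ with $i<j$ and $\sigma(j)<\sigma(i)$, which is exactly $\lownest(i,\sigma)=0$. So no real work is needed here beyond rephrasing.

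For part (2), the key observation is the elementary symmetry
$m\in\Recl\sigma \iff m\in\Arecp(\sigma^{-1})$.
To see it, set $i=\sigma^{-1}(m)$; then $(i,m)$ is a record of $\sigma$ iff $\sigma(j)<m$ for all $j<i$. Since $\sigma$ is a bijection, substituting $k=\sigma(j)$ turns this into: for every $k$ with $\sigma^{-1}(k)<\sigma^{-1}(m)$ one has $k<m$, equivalently, for every $k>m$ one has $\sigma^{-1}(k)>\sigma^{-1}(m)$. But that last condition is exactly the statement that $(m,\sigma^{-1}(m))$ is an antirecord of $\sigma^{-1}$, i.e. $m\in\Arecp(\sigma^{-1})$. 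Now I would apply part (1) to the permutation $\sigma^{-1}$, giving $m\in\Arecp(\sigma^{-1})\iff\lownest(m,\sigma^{-1})=0$, and finish using the already-noted identity $\upnest(m,\sigma)=\lownest(m,\sigma^{-1})$. (Alternatively, one can avoid the detour through $\sigma^{-1}$ and match the record condition for $m$ directly against the set $\{j:\sigma^{-1}(j)<\sigma^{-1}(m),\ m<j\}$ defining $\upnest(m,\sigma)$ by the same change of variables $j=\sigma(k)$; both routes are equivalent.)

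There is no serious obstacle here; the statement is essentially a reformulation. The only point requiring care is the bookkeeping of which variable ranges over positions and which over letters when passing to $\sigma^{-1}$, and it is worth a quick sanity check against the running example—e.g. $16\in\Recl\sigma$ with $\sigma^{-1}(16)=13$ gives $\upnest(16,\sigma)=0$, whereas $10\notin\Recl\sigma$ (because $\sigma(4)=11>10$ with $4<\sigma^{-1}(10)=6$) gives $\upnest(10,\sigma)\ge 1$—to guard against an inversion or off-by-one slip.
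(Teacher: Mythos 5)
Your proof is correct and follows essentially the same route as the paper: part (1) is a direct reformulation of the antirecord condition, and part (2) is reduced to (1) via the symmetry exchanging records of $\sigma$ with antirecords of $\sigma^{-1}$ together with the identity $\upnest(i,\sigma)=\lownest(i,\sigma^{-1})$. Nothing is missing.
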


\begin{proof}  Statement (1)  is clear by definition.
For $(i, \sigma) \in [n]\times \SS_n$, as  $ \upnest(i,\sigma)=\lownest(i,\sigma^{-1})$ 
we have 
 $$
 (i,  \sigma(i))\in \Arec\sigma\Leftrightarrow
		(\sigma(i), i)\in \Rec \sigma^{-1}.
				$$
		Hence  
		(2) is equivalent to (1).
   	\end{proof}
\noindent\textbf{Algorithm $\theta$}.
 For $\sigma\in\SS_n$, let $\theta(\sigma)=(s,\gamma)\in \L_n$ 
 where the pair $(s_i, \gamma_i)$ for $1\leq i\leq n$ is defined as follows:
 \begin{align}\label{def:laguerre-encoding}
(s_i, \gamma_i)=\begin{cases}
(\su, (\Delta, \Delta))&\textrm{if} \; i\in \Cval\sigma;
\\
(\sd, (\upnest(i,\sigma)+1, \lownest(i,\sigma)+1))&\textrm{if} \; i\in \Cpeak\sigma;
\\
(\la, (\upnest(i,\sigma)+1, \Delta))&\textrm{if} \; i\in \Cdrise\sigma;
\\
(\lb, (\Delta, \lownest(i,\sigma)+1))&\textrm{if} \; i\in \Cdfall\sigma;
\\
(\lc, (\Delta, h_{i-1}+1))&\textrm{if} \; i\in \Fix\sigma,
\end{cases}
\end{align}
We can show that  $\theta$ is a variant of some well-known bijections from permutations to Laguerre hostories; see \cite{Bi93,DV94, CSZ97, SZ19} for  more details and other variants.  
\begin{lem}  
Let  $\theta(\sigma)=(s,\gamma)$ and $h_0=0$. For $i=1,\ldots, n$,
\begin{itemize}
	\item if $i\in \Cval\sigma$,
	then $h_i=h_{i-1}+1$.
	\item if $i\in \Cpeak\sigma$,
	then $h_i=h_{i-1}-1$.
	\item if $i\in \Cdrise\sigma\cup\Cdfall\sigma\cup\Fix\sigma$,
	then $h_i=h_{i-1}$,
\end{itemize}
and $h_i=\#\{j\leq i: \sigma(j)>i\}=\#\{j\leq i: \sigma^{-1}(j)>i\}$.
\end{lem}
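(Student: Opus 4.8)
The plan is to verify the claimed height recursions by tracking, for each step $i$, how the set $\{j\leq i:\sigma(j)>i\}$ changes as $i$ increases by one, and to identify its cardinality with $h_i=|s_1\cdots s_i|_{\su}-|s_1\cdots s_i|_{\sd}$. First I would fix the combinatorial identity $\#\{j\leq i:\sigma(j)>i\}=\#\{j\leq i:\sigma^{-1}(j)>i\}$: both sides count pairs $(j,k)$ with $j\leq i<k$ and $\sigma(j)=k$, the left side indexing by $j$ and the right side by $k=\sigma(j)$, so this is immediate and can be dispatched in one line. It is convenient to name $A_i:=\{j\leq i:\sigma(j)>i\}$ and compare $A_i$ to $A_{i-1}$.

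The core of the argument is a bookkeeping step. Passing from $i-1$ to $i$, two things can happen to $A_{i-1}$: the index $i$ itself is newly eligible to be a left endpoint (it enters $A_i$ precisely when $\sigma(i)>i$), and the threshold rises from $i-1$ to $i$, so any $j\in A_{i-1}$ with $\sigma(j)=i$ is expelled (there is at most one such $j$, namely $\sigma^{-1}(i)$, and it lies in $A_{i-1}$ exactly when $\sigma^{-1}(i)<i$). Therefore
\begin{align*}
\#A_i-\#A_{i-1}=[\sigma(i)>i]-[\sigma^{-1}(i)<i].
\end{align*}
Now I would read off the four cases from the definitions of the cycle statistics: if $i\in\Cval\sigma$ then $\sigma^{-1}(i)>i$ and $\sigma(i)>i$, giving $+1$; if $i\in\Cpeak\sigma$ then $\sigma^{-1}(i)<i$ and $\sigma(i)<i$, giving $-1$; if $i$ is a cycle double rise, double fall, or fixed point, the two indicator values agree and the difference is $0$. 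This matches exactly the three bulleted claims, provided we know $\#A_i=h_i$.

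To close the loop I would prove $\#A_i=h_i$ by induction on $i$, the base case $\#A_0=0=h_0$ being trivial. The inductive step is precisely the matching of the increment $\#A_i-\#A_{i-1}$ just computed with the height increment of the Motzkin word: by Algorithm $\theta$, $s_i=\su$ exactly on cycle valleys (height $+1$), $s_i=\sd$ exactly on cycle peaks (height $-1$), and $s_i\in\{\la,\lb,\lc\}$ on the remaining three cases (height $0$), so $h_i-h_{i-1}$ equals the same case-by-case value as $\#A_i-\#A_{i-1}$. Hence $\#A_i=h_i$ for all $i$, which simultaneously establishes the height recursions and the final displayed formula. I do not anticipate a serious obstacle here; the only point requiring care is the at-most-one-expulsion observation and confirming that the expelled index is genuinely in $A_{i-1}$ under the stated inequality, together with making sure the $\Delta$-valued entries in $\gamma$ play no role (they do not, since $h_i$ depends only on the letter $s_i$, not on $\gamma_i$).
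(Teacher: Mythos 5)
Your argument is correct and complete. The paper actually states this lemma with no proof at all (it is treated as a standard property of the Laguerre-history encodings of \cite{Bi93,DV94,CSZ97}), so there is no "paper proof" to compare against; your inductive bookkeeping on $A_i:=\{j\le i:\sigma(j)>i\}$ is exactly the natural way to supply the missing details, and the increment formula $\#A_i-\#A_{i-1}=[\sigma(i)>i]-[\sigma^{-1}(i)<i]$ together with the case analysis over the five cycle types matches the letter assignments of Algorithm $\theta$ step for step. One small correction: your one-line justification of $\#\{j\le i:\sigma(j)>i\}=\#\{j\le i:\sigma^{-1}(j)>i\}$ is not quite right as stated, since the two sides do \emph{not} index the same set of pairs --- the left side counts pairs $(j,\sigma(j))$ with $j\le i<\sigma(j)$, while the right side counts pairs $(\sigma^{-1}(k),k)$ with $k\le i<\sigma^{-1}(k)$, which are different collections. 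The equality nevertheless holds because $\sigma$ restricted to $[n]$ is a bijection: if $a=\#\{j\le i:\sigma(j)\le i\}=\#\{k\le i:\sigma^{-1}(k)\le i\}$ (these two sets of pairs genuinely coincide), then both quantities in the lemma equal $i-a$. With that repair your proof stands.
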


A permutation 
  $\sigma\in\SS_n$  can be represented 
  by a \emph{bipartite digraph}  such that 
  \begin{itemize}
 \item the top row of vertices is  labelled by $1, \ldots, n$, 
 \item the bottom row of vertices is
 labelled by $1', \ldots, n'$, 
 \item there is an edge $i\to j'$ from the top row to the bottom row
 if and only if $\sigma(i)=j$.
 \end{itemize}
The bipartite digraph  associated to the running  example  is depicted  in Figure~\ref{graph-g}. We can visualize  the statistics $\lownest(i,\sigma)$ and $\upnest(i,\sigma)$ in \eqref{def:laguerre-encoding}  
by  the $i$-th restriction of the bipartite digraph  on 
${1\atop 1'}\cdots {i\atop i'}$
 for $i=1,\ldots, n$.  In other words, we have the following 
 graphical  description of
  the mapping $\theta$.\\
 
\medskip
\noindent
\textbf{Algorithm $\theta$ (bis).}  Let   $\sigma\in \SS_n$ and 
 $g_0=\emptyset$. For $i=1, \ldots, n$,  the $i$-th restriction $g_i$ is obtained from $g_{i-1}$ by 
  adding the column ${i\atop i'}$ at each time $i$ from left to right as follows:
	\begin{itemize}
		\item [(i)] 
		if $i\in \Cval\sigma$, 
		then $(s_i,\gamma_i)=(\su,(\Delta, \Delta))$,
		\item [(ii)] 
		if $i\in \Cpeak\sigma$, 
		then $(s_i,\gamma_i)=(\la, (\xi_i, \eta_i))$, where 
		$\xi_i-1$ is the number of vacant vertices to the left of 
		$\sigma^{-1}(i)$ at the top row of $g_i$ and 
		$\eta_i-1$ is the number of vacant vertices to the left of 
		$\sigma(i)$ at the bottom row of $g_i$.
		\item [(iii)] 
		if $i\in \Cdrise\sigma$, 
		 then
		$(s_i,\gamma_i)=(\la, (\xi_i,\Delta))$, 
		where $\xi_i-1$ is the number of vacant vertices to the left of $\sigma^{-1}(i)$ at the top row of $g_{i}$,
		\item [(iv)] 
		if  $i\in \Cdfall\sigma$, 
		then
		$(s_i,\gamma_i)=(\lb, (\Delta, \eta_i))$,
		where $\eta_i-1$ is the number of vacant vertices to  the left of 
		$\sigma(i)$ 
		 at the bottom row of $g_i$,
		\item [(v)] 
		if  $i\in \Fix\sigma$, 
		then $(s_i,\gamma_i)=(\lc, (\Delta, h_{i-1}+1))$, where $h_{i-1}$ is the number of vacant vertices at the top (or bottom) row of $g_{i-1}$.
			\end{itemize}

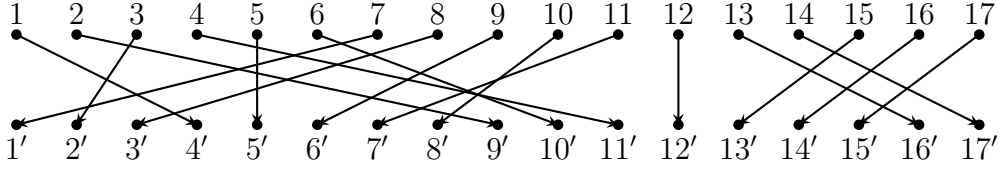
\begin{figure}[t]
	\begin{tikzpicture}[scale=0.8]
	\node[above] at (1,0.5) {$1$};
	\node[above] at (2,0.5) {$2$};
	\node[above] at (3,0.5) {$3$};
	\node[above] at (4,0.5) {$4$};
	\node[above] at (5,0.5) {$5$};
	\node[above] at (6,0.5) {$6$};
	\node[above] at (7,0.5) {$7$};
	\node[above] at (8,0.5) {$8$};
	\node[above] at (9,0.5) {$9$};
	\node[above] at (10,0.5) {$10$};
	\node[above] at (11,0.5) {$11$};
	\node[above] at (12,0.5) {$12$};
	\node[above] at (13,0.5) {$13$};
	\node[above] at (14,0.5) {$14$};
	\node[above] at (15,0.5) {$15$};
	\node[above] at (16,0.5) {$16$};
	\node[above] at (17,0.5) {$17$};
	\fill (1,0.5) circle (2.5pt);
	\fill (2,0.5) circle (2.5pt);
	\fill (3,0.5) circle (2.5pt);
	\fill (4,0.5) circle (2.5pt);
	\fill (5,0.5) circle (2.5pt);
	\fill (6,0.5) circle (2.5pt);
	\fill (7,0.5) circle (2.5pt);
	\fill (8,0.5) circle (2.5pt);
	\fill (9,0.5) circle (2.5pt);
	\fill (10,0.5) circle (2.5pt);
	\fill (11,0.5) circle (2.5pt);
	\fill (12,0.5) circle (2.5pt);
	\fill (13,0.5) circle (2.5pt);
	\fill (14,0.5) circle (2.5pt);
	\fill (15,0.5) circle (2.5pt);
	\fill (16,0.5) circle (2.5pt);
	\fill (17,0.5) circle (2.5pt);
	\fill (1,-1) circle (2.5pt);
	\fill (2,-1) circle (2.5pt);
	\fill (3,-1) circle (2.5pt);
	\fill (4,-1) circle (2.5pt);
	\fill (5,-1) circle (2.5pt);
	\fill (6,-1) circle (2.5pt);
	\fill (7,-1) circle (2.5pt);
	\fill (8,-1) circle (2.5pt);
	\fill (9,-1) circle (2.5pt);
	\fill (10,-1) circle (2.5pt);
	\fill (11,-1) circle (2.5pt);
	\fill (12,-1) circle (2.5pt);
	\fill (13,-1) circle (2.5pt);
	\fill (14,-1) circle (2.5pt);
	\fill (15,-1) circle (2.5pt);
	\fill (16,-1) circle (2.5pt);
	\fill (17,-1) circle (2.5pt);
	\node[below] at (1,-1) {$1'$};
	\node[below] at (2,-1) {$2'$};
	\node[below] at (3,-1) {$3'$};
	\node[below] at (4,-1) {$4'$};
	\node[below] at (5,-1) {$5'$};
	\node[below] at (6,-1) {$6'$};
	\node[below] at (7,-1) {$7'$};
	\node[below] at (8,-1) {$8'$};
	\node[below] at (9,-1) {$9'$};
	\node[below] at (10,-1) {$10'$};
	\node[below] at (11,-1) {$11'$};
	\node[below] at (12,-1) {$12'$};
	\node[below] at (13,-1) {$13'$};
	\node[below] at (14,-1) {$14'$};
	\node[below] at (15,-1) {$15'$};
	\node[below] at (16,-1) {$16'$};
	\node[below] at (17,-1) {$17'$};
	\draw [>=stealth,  thick,  ->](1,0.5)--(4,-1);
	\draw [>=stealth,  thick,  ->](2,0.5)--(9,-1);
	\draw [>=stealth,  thick,  ->](3,0.5)--(2,-1);
	\draw [>=stealth,  thick,  ->](4,0.5)--(11,-1);
	\draw [>=stealth,  thick,  ->](5,0.5)--(5,-1);
	\draw [>=stealth,  thick,  ->](6,0.5)--(10,-1);
	\draw [>=stealth,  thick,  ->](7,0.5)--(1,-1);
	\draw [>=stealth,  thick,  ->](8,0.5)--(3,-1);
	\draw [>=stealth,  thick,  ->](9,0.5)--(6,-1);
	\draw [>=stealth,  thick,  ->](10,0.5)--(8,-1);
	\draw [>=stealth,  thick,  ->](11,0.5)--(7,-1);
	\draw[>=stealth,  thick,  ->] (12,0.5)--(12,-1);
	\draw [>=stealth,  thick,  ->](13,0.5)--(16,-1);
	\draw [>=stealth,  thick,  ->](14,0.5)--(17,-1);
	\draw [>=stealth,  thick,  ->](15,0.5)--(13,-1);
	\draw [>=stealth,  thick,  ->](16,0.5)--(14,-1);
	\draw [>=stealth,  thick,  ->][->] (17,0.5)--(15,-1);
	\end{tikzpicture}
	\caption{ \small The  bipartite digraph of $\sigma=(1,4,11,7)(2,9,6,10,8,3)(5)(12)(13,16,14,17,15)$ }\label{graph-g}
\end{figure}

\begin{lem}\label{cycle pattern}  If $g$ is the bipartite digraph of $\sigma\in \SS_n$ and $g_i$ is the $i$-th restriction of $g$ ($1\leq i\leq n$), then
the index $i$ is a cycle maximum  of $\sigma$ if and only if 
either $i\to i'$ is an edge or
there are integers 
$i_1, i_2, \ldots, i_{k}$ in $[i]$ such that  
 $i\to i_{1}', i_{1}\to i_{2}', \ldots,i_{k}\to i'$ are edges of $g_i$.
\end{lem}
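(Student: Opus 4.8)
The plan is to characterize cycle maxima combinatorially via the structure of the bipartite digraph restricted to the first $i$ columns. The key observation is that for a permutation $\sigma\in\SS_n$, the index $i$ is the maximum of its cycle precisely when, following the cycle of $\sigma$ starting at $i$, one never leaves the set $[i]$ before returning to $i$. Indeed, if $c_1,\dots,c_r$ are the cycle maxima and $i=c_t$ for some $t$, then the orbit of $i$ under $\sigma$ is $\{i,\sigma(i),\sigma^2(i),\dots\}$, and all these elements are $\le i$; conversely if $i$ is not the cycle maximum of its own cycle, then some element $m>i$ lies in that cycle, and the path from $i$ through the cycle must pass through $m$, hence must leave $[i]$.

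First I would translate this into the language of the $i$-th restriction $g_i$. Recall $g_i$ has top vertices $1,\dots,i$, bottom vertices $1',\dots,i'$, and an edge $j\to k'$ whenever $\sigma(j)=k$ and $j,k\le i$. So the edges of $g_i$ are exactly the pairs $(j,\sigma(j))$ with $j\le i$ \emph{and} $\sigma(j)\le i$. Starting from top vertex $i$: if $\sigma(i)=i$ (the fixed-point case), then $i\to i'$ is an edge of $g_i$ and $i$ is trivially its own cycle maximum. Otherwise, I would trace the cycle of $i$: write the cycle as $(i, i_1, i_2, \dots, i_k)$ in the sense that $\sigma(i)=i_1$, $\sigma(i_1)=i_2$, \dots, $\sigma(i_k)=i$, where $k\ge 1$. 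If all of $i_1,\dots,i_k$ lie in $[i]$ — which, since they are $\ne i$, means all lie in $[i-1]$ — then each of the arrows $i\to i_1', i_1\to i_2', \dots, i_k\to i'$ has both endpoints among the first $i$ columns and hence is an edge of $g_i$, giving exactly the configuration in the statement. Conversely, suppose such integers $i_1,\dots,i_k\in[i]$ exist with $i\to i_1', i_1\to i_2',\dots,i_k\to i'$ edges of $g_i$; then these edges record $\sigma(i)=i_1,\dots,\sigma(i_k)=i$, so $i$ lies in the cycle $(i,i_1,\dots,i_k)$ all of whose elements are $\le i$, whence $i$ is the maximum of its cycle. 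The fixed-point case $i\to i'$ is just the degenerate $k=0$ instance but is listed separately for clarity.

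The one point that needs a little care — and is the main (mild) obstacle — is the logical direction "cycle maximum $\Rightarrow$ the edges exist in $g_i$": one must verify that not only are $i_1,\dots,i_k\le i$ but that the relevant edges genuinely appear in the \emph{restriction} $g_i$, i.e. that each arrow $i_j\to i_{j+1}'$ has \emph{both} its top label $i_j$ and its bottom label $i_{j+1}=\sigma(i_j)$ in $\{1,\dots,i\}$; this is immediate once we know every element of the cycle is $\le i$, since both $i_j$ and $\sigma(i_j)$ are cycle elements. One should also note that the $i_j$ are automatically distinct (they are the distinct non-$i$ elements of a cycle), so the displayed path is genuinely a simple alternating path from $i$ to $i'$ in $g_i$. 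With these remarks the equivalence follows directly; I would present it as the two implications above, invoking nothing beyond the definition of the bipartite digraph and its restrictions.
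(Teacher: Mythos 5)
Your argument is correct and follows essentially the same route as the paper's own proof: characterize a cycle maximum $i$ by the condition that $\sigma^\ell(i)\le i$ for all $\ell\ge 0$, and then observe that this is exactly the condition for the whole cycle of $i$ to be recorded as the closed alternating path $i\to i_1',\,i_1\to i_2',\dots,i_k\to i'$ inside the restriction $g_i$. Your version merely spells out the two implications (and the degenerate fixed-point case) more explicitly than the paper does.
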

\begin{proof}
 Given  a permutation $\sigma\in \SS_n$, an index $i$ is a cycle maximum  if and only if
$\sigma^\ell(i)\leq i$  and 
for all $\ell\geq 0$, this means either $i\to i'$ is an edge or 
there are 
vertices 
$i_1, i_2, \ldots, i_{k}$ between $1$ and $i$
 at the top row
and  $i_1', \ldots, i_{k}'$ at the bottom row such that  
 $i\to i_{1}', i_{1}\to i_{2}', \ldots,i_{k}\to i'$ are edges in the bipartite graph $g_i$.
\end{proof}

\begin{rmk}
Starting from any permutation we can draw
the corresponding bipartite digraph or Laguerre history   through the 
above correspondences.  By the above lemma, the graphical interpretation 
enables us to to count  the cycle maxima of a permutation 
in  our bijections between Laguerre histories, which seems 
  difficult  in the corresponding Laguerre history via the bijection $\theta$.
\end{rmk}

 For each  Laguerre history $(s,\gamma)\in \L_n$ we define 
 the set-valued statistics:
\begin{align}
 \Arecp (s,\gamma)&=\{i: s_i\in \{\sd,  \lb, \lc\}\; \text{and}\; \eta_i=1\}\label{def:arecp}\\
		 \Erecl(s,\gamma)&=\{i: s_i=\sd\;\text{or}\;  \la, \text{and}\; \xi_i=1\}\\
		 \Erecp(s,\gamma)&=\{i: i\to j'\; \text{in g and}\; j\in \Erecl(s,\gamma)\}\\
		\Erec(s,\gamma)&=(\Erecp(s,\gamma), \Erecl(s,\gamma))\\
		 \Excp(s,\gamma)&=\{i: s_i=\su \;\text{or} \;\la\},\quad
		\Excl(s,\gamma)=\{i: s_i=\sd \;\text{or} \; \la\}\label{def:excpl}\\
		\Exc(s,\gamma)&=(\Excp(s,\gamma), \Excl(s,\gamma))\\
		 \Rar(s,\gamma)&=\{i: s_i=\lc \; \text{and}\;\eta_i=1 \}\\
		  \Cyc(s,\gamma)&=\{i: s_i=\lc \,\text{or}\, s_i=\sd \; \text{with}\; i\to i_{1}', i_{1}\to i_{2}', \ldots,i_{k}\to i'\; \text{in}\; g_i\}.\label{def:cyc}
	\end{align}


\begin{lem}\label{theta:properties}
	Let  $\sigma\in\SS_n$ and $\theta(\sigma)= (s,\gamma)$. 
	Then, 
	\begin{itemize}
		\item[(i)] $\Arecp (s,\gamma)
		=\Arecp\sigma$.
		\item[(ii)] $\Erecl(s,\gamma)
		=\Erecl\sigma$.
		\item[(iii)] $\Rar(s,\gamma)%
		=\Rar\sigma$.
		\item[(iv)] $\Excp(s,\gamma)
		=\Excp\sigma$.
		\item[(v)]
		$\Excl(s,\gamma)
		=\Excl\sigma$.
		\item[(vi)] $\Cyc(s,\gamma)
		=\Cyc\sigma$.
	\end{itemize}
\end{lem}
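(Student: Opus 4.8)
The plan is to verify the six items one at a time, in each case unwinding the definition \eqref{def:laguerre-encoding} of $\theta$ together with the characterization of the relevant statistic on Laguerre histories in \eqref{def:arecp}--\eqref{def:cyc}. The key bridge is Lemma~2.2: for each index $i$, the data $(s_i,\gamma_i)=(s_i,(\xi_i,\eta_i))$ recorded by $\theta$ is built from $\upnest(i,\sigma)$ and $\lownest(i,\sigma)$, while Lemma~2.2 says $i\in\Arecp\sigma \iff \lownest(i,\sigma)=0$ and $i\in\Recl\sigma \iff \upnest(i,\sigma)=0$. First I would treat (i): an index $i$ contributes to $\Arecp(s,\gamma)$ exactly when $s_i\in\{\sd,\lb,\lc\}$ and $\eta_i=1$; by \eqref{def:laguerre-encoding} this is $i\in\Cpeak\sigma\cup\Cdfall\sigma$ with $\lownest(i,\sigma)+1=1$, or $i\in\Fix\sigma$ (where $\eta_i=h_{i-1}+1$, so the condition $\eta_i=1$ forces $h_{i-1}=0$). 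A fixed point $i$ with $h_{i-1}=0$ is an antirecord position, and a fixed point is always an antirecord position when nothing below it to the right is smaller; here I would note that $h_{i-1}=0$ means no open arcs pass over position $i$, i.e. $\sigma(j)\le i$ for all $j<i$ and (since $\sigma(i)=i$) $i$ is both a record and an antirecord — in any case the union $\Cpeak\sigma\cup\Cdfall\sigma\cup(\text{such fixed points})$ is precisely $\{i:\lownest(i,\sigma)=0,\ \sigma(i)\le i\}$, which together with the cycle-valley/drise/other-fixed-point cases (all of which have $\lownest>0$ or are excedance positions handled separately) gives $\Arecp\sigma$ via Lemma~2.2(1). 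The cleanest route is: $i$ is an antirecord position iff $\lownest(i,\sigma)=0$; and $\lownest(i,\sigma)=0$ is incompatible with $i$ being a cycle valley or cycle double rise (those force an arc $i\to\sigma(i)'$ with $\sigma(i)>i$, but can still have $\lownest=0$ — so actually I need to check excedance positions too). The safe statement is that for \emph{every} $i$, reading off whether the second coordinate $\eta_i$ equals $1$ (interpreting $\Delta$ and $h_{i-1}+1\ge 1$ appropriately) detects $\lownest(i,\sigma)=0$, and I would phrase item (i) exactly that way.

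Items (ii) and (iii) go the same way. For (ii), $i\in\Erecl(s,\gamma)$ means $s_i\in\{\sd,\la\}$ and $\xi_i=1$, i.e. $i\in\Cpeak\sigma\cup\Cdrise\sigma$ with $\upnest(i,\sigma)=0$; by Lemma~2.2(2) this is the set of letters $i$ with $\upnest(i,\sigma)=0$ that are not fixed points and not cycle valleys, which — after ruling out the valley/fixed-point cases (a cycle valley or fixed point, read as a \emph{letter}, is recorded as a record letter only when it is also an antirecord letter, i.e. a record-antirecord, which is exactly the $\Rar$ case) — matches $\Erecl\sigma=\Recl\sigma\setminus\Rar\sigma$. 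For (iii), $i\in\Rar(s,\gamma)$ means $s_i=\lc$ and $\eta_i=1$, hence (from the $\lc$ rule) $h_{i-1}+1=1$, i.e. $i$ is a fixed point with no open arcs over it, which is exactly a record-antirecord; Lemma~2.3 ($h_{i-1}=\#\{j<i:\sigma(j)\ge i\}=\#\{j<i:\sigma^{-1}(j)\ge i\}$) makes ``$h_{i-1}=0$'' literally ``$\sigma(j)<i$ for all $j<i$ and $\sigma(j)>i$ for all $j>i$'', i.e. $i\in\Rar\sigma$. Items (iv) and (v) are immediate from \eqref{def:laguerre-encoding}: $\Excp(s,\gamma)=\{i:s_i\in\{\su,\la\}\}=\Cval\sigma\cup\Cdrise\sigma=\{i:\sigma(i)>i\}=\Excp\sigma$, and dually $\Excl(s,\gamma)=\{i:s_i\in\{\sd,\la\}\}=\Cpeak\sigma\cup\Cdrise\sigma$, which as a set of \emph{letters} equals $\{\sigma(i):\sigma(i)>i\}=\Excl\sigma$ — here I'd note $\Cdrise$ contributes the same index both as a position and as a letter only in the sense that $s_i=\la$ flags position $i$ in $\Excp$ and letter-slot $i$... actually the letter excedance set is $\{j:\sigma^{-1}(j)<j\}=\Cpeak\sigma\cup\Cdrise\sigma$, matching exactly.

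For (vi) I would invoke Lemma~2.4 (cycle-pattern lemma) directly: $i\in\Cyc(s,\gamma)$ is \emph{defined} in \eqref{def:cyc} to be ``$s_i=\lc$, or $s_i=\sd$ with a return-path $i\to i_1',i_1\to i_2',\dots,i_k\to i'$ in $g_i$'', and Lemma~2.4 says precisely that this combinatorial condition on $g_i$ characterizes $i$ being a cycle maximum of $\sigma$; since $\theta$ is built so that $s_i=\lc\iff i\in\Fix\sigma$ and $s_i=\sd\iff i\in\Cpeak\sigma$, and a cycle maximum is either a fixed point or a cycle peak (its predecessor and successor in the cycle are both smaller), the two descriptions agree, giving $\Cyc(s,\gamma)=\Cyc\sigma$. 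The main obstacle throughout is bookkeeping rather than depth: one must be careful about the dual roles of an index as a \emph{position} versus a \emph{letter} (items (ii), (iv), (v) all have a letter-side subtlety where one passes through $\sigma^{-1}$ and uses $\upnest(i,\sigma)=\lownest(i,\sigma^{-1})$), and about the boundary conventions for $\Delta$ and for the $\lc$-value $h_{i-1}+1$ when $h_{i-1}=0$. I expect the trickiest single point to be item (vi), where one genuinely needs the graphical reformulation ``Algorithm $\theta$ (bis)'' to see that the return-path condition is stable under the left-to-right construction of $g_i$ — but that is exactly what Lemma~2.4 supplies, so the proof reduces to citing it.
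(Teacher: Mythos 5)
Your plan follows the paper's proof essentially step for step: both arguments reduce each item to Lemma~3 (the characterizations $i\in\Arecp\sigma\Leftrightarrow\lownest(i,\sigma)=0$ and $i\in\Recl\sigma\Leftrightarrow\upnest(i,\sigma)=0$), the height formula of Lemma~4 for the fixed-point/$\lc$ case and for $\Rar$, and the cycle-pattern Lemma~5 for item (vi). The one loose end is the point you raise in item (i) and then leave hanging: whether an excedance position (where $s_i\in\{\su,\la\}$, so $i\notin\Arecp(s,\gamma)$ by definition) could nevertheless belong to $\Arecp\sigma$. It cannot: if $\lownest(i,\sigma)=0$ then the $n-i+1$ distinct values $\sigma(j)$ with $j\ge i$ all satisfy $\sigma(j)\ge\sigma(i)$, which forces $\sigma(i)\le i$. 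This is precisely the paper's opening observation that $i\in\Arecp\sigma$ implies $i\in\Cpeak\sigma\cup\Cdfall\sigma\cup\Fix\sigma$ (and, dually, that $i\in\Erecl\sigma$ implies $\sigma^{-1}(i)<i$, which is what you need in item (ii) to discard cycle valleys, cycle double falls and non-$\Rar$ fixed points). So your parenthetical worry that a cycle valley ``can still have $\lownest(i,\sigma)=0$'' is unfounded; once this one counting observation is inserted, your argument closes up and coincides with the paper's.
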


\begin{proof}
	Clearly, if 
	$i\in\Arecp\sigma$,
	then $\sigma(i)\leq i$, i.e., $i\in \Cdfall\sigma\cup\Cpeak\sigma\cup\Fix\sigma$,
	and if $i\in \Erecl\sigma$,
	then $i> \sigma^{-1}(i)$, i.e., $i\in \Cdrise\sigma\cup \Cpeak\sigma$.
	By Lemma~3 and (2.1) we get (i) and (ii).
	Next, an integer $i\in \Rar\sigma
	\Leftrightarrow
	i\in \Fix\sigma\cap\Arecp\sigma$,
	so by (i) and \eqref{def:laguerre-encoding} we get (iii).
	Since $\Excp=\Cval\cup \Cdrise$ and $\Excl=\Cpeak\cup \Cdrise$,
	by \eqref{def:laguerre-encoding}, we get (iv) and (v).
	By Lemma~5 we get (vi).
	\end{proof}


\section{Proof of Theorem 1}
\subsection{Algorithm $\rho_1$}\label{algo-rho1}
For $(s,\gamma)\in \L_n$, 
we define $\rho_1(s,\gamma)=(s',\gamma')$  through 
the corresponding bipartite digraphs $g'_i$ for $1\leq i\leq n$.
Set $g'_0=\emptyset$.
The graph $g'_{i}$ is
obtained   from   $g'_{i-1}$ by first adding the column 
 ${i\atop i'}$ with  possibles edges as follows:

\begin{itemize}
	\item [(i)-(ii)] 
	If $s_i=\su$ or $\la$, then $(s_i', \gamma')=(s_i, \gamma_i)$.
	\item [(iii)] 
	If  $s_i=\lb$ and $\gamma_i=(\Delta,\eta_i)$,
	then 
	\begin{align}
	(s_i', (\xi'_i, \eta_i'))=\begin{cases}
	(\lc, (\Delta, h_{i-1}+1))& \text{if $\eta_i=1$}\\
	(\lb, (\Delta, \eta_i))& \text{if $\eta_i>1$}.
	\end{cases}
	\end{align}
	Note that $\eta_i'>1$ because  $h_{i-1}\geq 1$.
	\item [(iv)] 
	If  $s_i=\lc$ and $\gamma_i=(\Delta, h_{i-1}+1)$,
	then 
	\begin{align}
	(s_i', (\xi'_i, \eta_i'))=\begin{cases}
	(\lc, (\Delta,1))& \text{if $\eta_i=1$}\\
	(\lb, (\Delta, 1))& \text{if $\eta_i>1$}.
	\end{cases}
	\end{align}
 \item [(v)] 
		If  $s_i=\sd$ and $\gamma_i=(\xi_i,\eta_i)$, let $s_i'=s_i$ and 
		$\xi_i'=\xi_i$.  Thus 
		the vertex $i'$ is connected to the $\xi_i$-th vacant vertex  $v_1$ at the top row of $g_i'$ and there are edges  $i'\to v_1, v'_1\to v_2, \ldots,
		v'_{r-1}\to v_r$ such that  $v_r'$ is vacant. Assume that  $v_r'$ is  the $\eta^*_i$-th 
		vacant vertex  at the bottom row  from left to right.  Then
		\begin{align}\label{V}
		\eta_i'=\begin{cases}
		\eta_i^*&\text{if $\eta_i=1$}\\
		\eta_i&\text{if $\eta_i>\eta_i^*$}\\
		\eta_i-1&\text{if $1<\eta_i\leq \eta_i^*$}.
		\end{cases}
		\end{align}
	\end{itemize}
Set $\rho_1(s, \gamma)=(s', \gamma')$, where $s'=(s_1', \ldots, s_n')$ and
$\gamma'=(\gamma_1', \ldots, \gamma_n')$.

\begin{lem}
	The mapping $\rho_1: \L_n\to \L_n$ is a bijection.
	\end{lem}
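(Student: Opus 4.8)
The plan is to show that $\rho_1$ is a bijection by exhibiting its inverse explicitly, processing the Laguerre history (equivalently, building the bipartite digraph) column by column from left to right, exactly as $\rho_1$ does. Since $\rho_1$ is defined step-by-step through the restrictions $g'_i$, and since at step $i$ the transformation $(s_i,\gamma_i)\mapsto(s'_i,\gamma'_i)$ depends only on $s_i$, on $\gamma_i$, and on the already-constructed graph $g'_{i-1}$ (through the height $h_{i-1}$ and the vacancy positions), it suffices to check that for each of the five cases the local rule is invertible given $g'_{i-1}$. First I would record the elementary fact, immediate from the definition and already implicit in Lemma~4, that the height sequence $h_0,h_1,\dots,h_n$ is \emph{unchanged} by $\rho_1$: cases (i)--(ii) preserve step types $\su,\la$; case (iii) sends $\lb\mapsto\lc$ or $\lb$; case (iv) sends $\lc\mapsto\lc$ or $\lb$; case (v) keeps $s'_i=\sd$. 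In every case the step type stays within $\{\su\}$, $\{\sd\}$, or $\{\la,\lb,\lc\}$, so $|s_1\cdots s_i|_\su-|s_1\cdots s_i|_\sd$ is invariant; in particular $(s',\gamma')$ is again a valid Laguerre history (one should double-check the bound $\gamma'_i$ in each case falls in the prescribed range, e.g.\ $\eta'_i\le h_{i-1}$ in case (iii) when $\eta_i>1$, $\eta'_i=1\le h_{i-1}$ in case (iv) when $\eta_i>1$ since $h_{i-1}\ge1$, and the three alternatives in \eqref{V} all land in $\{1,\dots,h_{i-1}\}$).

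Next I would invert each case. Cases (i)--(ii) are the identity, so trivially invertible. In case (iii) the output has $s'_i\in\{\lb,\lc\}$ with $\xi'_i=\Delta$; from $g'_{i-1}$ we know $h_{i-1}$, so an output $(\lc,(\Delta,h_{i-1}+1))$ could only have come from $\eta_i=1$, while an output $(\lb,(\Delta,\eta'_i))$ with $1<\eta'_i\le h_{i-1}$ came from $\eta_i=\eta'_i$ — but this collides with genuine case-(iv) and case-(v) outputs, so the inversion must be done jointly across cases (iii)--(v), keyed off the original step type which we recover from the height change. That is the crux: given $(s',\gamma')$ and $g'_{i-1}$, we read $h_{i-1}=h'_{i-1}$, then look at $s'_i$. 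If $s'_i\in\{\su,\la\}$ we are in case (i)--(ii). If $s'_i=\lc$: an output $(\lc,(\Delta,h_{i-1}+1))$ is either the untouched image of a $\lc$-step with $\eta_i=1$ under case (iv)?—no, case (iv) with $\eta_i=1$ gives $(\lc,(\Delta,1))$, which equals $(\lc,(\Delta,h_{i-1}+1))$ only if $h_{i-1}=0$, impossible since a $\lc$-step needs nothing but $h_{i-1}=0$ is allowed... here I must be careful: a $\lc$-step at position $i$ sets $h_i=h_{i-1}$ and requires $\eta_i=h_{i-1}+1$, so $h_{i-1}$ can be $0$. So the two sources of a $\lc$-output are: original $\lb$ with $\eta_i=1$ (case iii), giving $\eta'_i=h_{i-1}+1$; and original $\lc$ with $\eta_i=1$ (case iv), giving $\eta'_i=1$. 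These are distinguishable precisely when $h_{i-1}\ge1$ (then $h_{i-1}+1\neq1$), and when $h_{i-1}=0$ a $\lb$-step is impossible anyway (it needs $h_{i-1}\ge1$), so the source is unambiguously the $\lc$-case. Hence from $\eta'_i$ and $h_{i-1}$ we recover both the original step type and $\eta_i$. Similarly $s'_i=\lb$ arises from: original $\lb$ with $\eta_i>1$ (case iii), giving $\eta'_i=\eta_i\in\{2,\dots,h_{i-1}\}$; or original $\lc$ with $\eta_i>1$ (case iv), giving $\eta'_i=1$. Again $\eta'_i=1$ versus $\eta'_i>1$ separates the two, and recovers $\eta_i$ (for the $\lc$-case we know $\eta_i>1$ but the exact value... wait — case (iv) forgets which value $\eta_i>1$ took!).

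Here is the genuine main obstacle, and the resolution: case (iv) compresses all $\eta_i\in\{2,\dots,h_{i-1}+1\}$ into the single output $(\lb,(\Delta,1))$, so pointwise at column $i$ the map is \emph{not} injective — injectivity is global, recovered only because case (v), when reading the $\sd$-step at the matching later position via the chain $i'\to v_1\to\cdots\to v_r$, reconstructs $\eta^*_i$ from the geometry of $g'_i$, and the inverse of \eqref{V} then pins down the original $\eta_i$. Concretely, to invert I would run the algorithm forward while simultaneously maintaining the preimage digraph $g$: at a $\sd$-column with data $(\xi'_i,\eta'_i)$ I locate the chain $i'\to v_1,\dots,v'_{r-1}\to v_r$ in $g'_i$ (which is fully determined by $g'_{i-1}$ and $\xi'_i=\xi_i$), read off $\eta^*_i$ = the rank of the vacant slot $v'_r$ among bottom vacancies, and set $\eta_i=\eta^*_i$ if the chain closed up ($\eta'_i$ should then equal... ) — more precisely I invert \eqref{V}: if $\eta'_i=\eta^*_i$ then $\eta_i=1$; if $\eta'_i\ge\eta^*_i$ then $\eta_i=\eta'_i+1$ (this is the $1<\eta_i\le\eta^*_i$ branch, wait, the branch gives $\eta'_i=\eta_i-1$ so $\eta_i=\eta'_i+1\le\eta^*_i$, i.e. $\eta'_i<\eta^*_i$); and if $\eta'_i>\eta^*_i$ then $\eta_i=\eta'_i$. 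The three cases on $\eta'_i$ vs $\eta^*_i$ are exhaustive and disjoint, so $\eta_i$ is uniquely recovered, and with it the original column; crucially this determines how many vacant bottom vertices get filled, which retroactively tells us, for each earlier $\lc$-output-turned-$\lb$ (case iv with $\eta_i>1$), what $\eta_i$ was — but in fact we do not need to ``go back'': by maintaining $g$ alongside $g'$ as we sweep left to right, at each column all needed data ($h_{i-1}$, vacancy ranks in both $g_{i-1}$ and $g'_{i-1}$) is already available, so the inverse is a well-defined left-to-right algorithm. I would then conclude: $\rho_1$ followed by this inverse algorithm is the identity on $\L_n$ (case-by-case), and since $\L_n$ is finite and the inverse is everywhere defined, $\rho_1$ is a bijection. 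The writeup should present the inverse algorithm $\rho_1^{-1}$ in the same five-case format as $\rho_1$ and verify, case by case, that the composition is the identity; the only delicate verification is case (v), where one must check that the $\eta^*_i$ computed in the forward and backward passes agree — which holds because $\xi'_i=\xi_i$ fixes the chain, and the chain lives entirely in $g'_i$ built from $g'_{i-1}$, data common to both passes.
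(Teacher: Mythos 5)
Your overall strategy is the paper's: exhibit the inverse explicitly, sweeping the columns left to right, the only nontrivial case being the $\sd$-steps, where the chain $i'\to v_1,\dots,v'_{r-1}\to v_r$ in $g'_i$ (determined by $\xi'_i=\xi_i$ and $g'_{i-1}$) yields the rank $\eta^*_i$ and the three-case rule \eqref{V} is inverted. Your corrected inversion ($\eta'_i=\eta^*_i\Rightarrow\eta_i=1$; $\eta'_i<\eta^*_i\Rightarrow\eta_i=\eta'_i+1$; $\eta'_i>\eta^*_i\Rightarrow\eta_i=\eta'_i$) is exactly the paper's \eqref{V-1}, and your observation that the forward and backward passes compute the same $\eta^*_i$ because the chain is determined by data common to both is the content of the paper's remark after the lemma. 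Your preliminary check that heights are preserved and that each $\gamma'_i$ lands in the admissible range is a worthwhile addition that the paper leaves implicit.

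However, the passage you flag as the ``genuine main obstacle'' is not an obstacle, and your proposed resolution of it is wrong. By the definition of a Laguerre history, an $\lc$-step carries the \emph{forced} label $\gamma_i=(\Delta,h_{i-1}+1)$: there is no freedom in $\eta_i$, so the condition ``$\eta_i>1$'' in case (iv) just means $h_{i-1}\ge 1$, and the input to that branch is the single pair $(\lc,(\Delta,h_{i-1}+1))$. Nothing is ``compressed'' into $(\lb,(\Delta,1))$, and that output has a unique preimage read off from $h_{i-1}$ alone; indeed cases (iii)--(iv) together form a local involution on $\{\lb,\lc\}$-columns, which is precisely why the paper defines $\rho_1^{-1}$ on non-$\sd$ steps by reapplying the same rules. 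Your claimed fix --- that the later $\sd$-column ``retroactively tells us, for each earlier $\lc$-output-turned-$\lb$, what $\eta_i$ was'' --- describes a mechanism that does not exist in the algorithm (case (v) determines $\eta_i$ only at its own column) and would not suffice if it were actually needed (a cycle closed by an $\lc$-step never meets any $\sd$-column). You even state the key fact $\eta_i=h_{i-1}+1$ earlier in your argument and then fail to use it. Delete the ``obstacle/resolution'' passage, justify case (iv) by that one-line observation, and your proof becomes essentially the paper's.
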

\begin{proof}  
We construct the inverse  mapping $\rho_1^{-1}$. Let 
$(s', \gamma')=\rho_1(s,\gamma)$ and $g''_0=\emptyset$.
	For $1\leq i\leq n$, 
	if $s'_i\in \{\su,\la,\lb,\lc\}$,
	we define $(s''_i, \gamma''_i)$ 
	in the same way as $\rho_1$; 
	if $s'_i=\sd$ 
    and $\gamma'_i=(\xi'_i, \eta'_i)$, let $s_i''=D$ and $\xi_i''=\xi'_i$.
	Let $v_r$ be the unique vertex such that
    $i'\gets v_1, v'_1\gets v_2, \ldots,v'_{r-1}\gets v_r$ and $v'_r\gets i$ or  $v'_r$ is vacant   in $g'_{i}$. 
	Assume that there are  $\tilde{\eta}^*_i-1$ vacant vertices at the left of $v'_r$
at the bottom row 
	of  $g'_{i}$.  Then
			\begin{align}\label{V-1}
		\eta_i''=\begin{cases}
		1&\text{if $\eta_i'=\tilde{\eta}_i^*$}\\
		\eta_i'&\text{if $\eta_i'>\tilde{\eta}_i^*$}\\
		\eta_i'+1&\text{if $1\leq \eta_i'<\tilde{\eta}_i^*$}.
		\end{cases}
		\end{align}
		Set $\rho_1^{-1}(s', \gamma')=(s'', \gamma'')$, where $s''=(s_1'', \ldots, s_n'')$ and
$\gamma''=(\gamma_1'', \ldots, \gamma_n'')$.
  We show that  $(s''_i, \gamma''_i)=(s_i,\gamma_i)$
 by induction on $i$.
 If $i=1$, then  $(s_1,\gamma_1)=(\su, (\Delta, \Delta))$
 or $(\lc,(1,1))$. By definition of $\rho_1$ (see (i) and (iv) (a)) and $\rho^{-1}_1$, 
  for both two cases,
 we get $(\bar{s}_1,\bar{\gamma}_1)=(s_1,\gamma_1)$.
 For $i\geq 2$ assume 
 $(s''_k,\gamma''_k)=(s_k,\gamma_k)$ for $1\leq k\leq i-1$.
We have to verify  the validity for $k=i$ for the five types of $s_i$ in \S~\ref{algo-rho1}. This  is  trivial for 
the cases (i)-(iv).  For  case (v), we have 
 	$s_i=s_i'=s''_i=D$ and $\xi''_i=\xi_i'=\xi_i$. 
 	By (v), the mapping $\rho_1$ provides
 	the index  $\eta^*_i$, which is equal to $\tilde{\eta}^*_i$.
	There are three cases~:
 	\begin{itemize}
 		\item 
		$\eta_i=1$.
 	    Then $\eta'_i=\eta^*_i$ by \eqref {V}, this means 
 	    $\eta''_i=1$  by  \eqref{V-1}.
 		\item 
		 $\eta_i>\eta^*_i$.
	    Then $\eta'_i=\eta_i$,
	    and $\bar{\eta}_i=\eta'_i=\eta_i$ by  \eqref {V}and 
	    \eqref{V-1}.
 		\item 
		$1<\eta_i\leq \eta^*_i$.
 		Then $\eta'_i=\eta_i-1$ by  \eqref {V}
 		and $\bar{\eta}_i=\eta'_i+1=\eta_i$  by 
	   \eqref{V-1}.
 	\end{itemize}
Summarizing the above five cases, we have completed the proof.
 \end{proof}
 
 \begin{rmk}
 	  Using the graph $g_i'$ we determine  $\eta^*_i$ by  (3.3) and 	
	  characterize  $i\in\Cyc(s',\gamma')$ by the equation $\eta'_i=\eta^*_i$. 
	  Next, 
 	 using $g_i'$ we get the index $\tilde{\eta}_i^*$ by (3.4)  and identify $i\in\Cyc(s',\gamma')$
 	by the equation $\eta'_i=\tilde{\eta}_i^*$.
 	Thus  $\eta^*_i=\tilde{\eta}_i^*$.
 	\end{rmk}

\begin{lem}\label{rho1:properties}
For $(s,\gamma)\in \L_n$, if   $\rho_1(s,\gamma)=(s',\gamma')$, then
\begin{equation*}
	(\Cyc ,\Exc ,\Erecl ,\Rar)(s',\gamma')
	=(\Arecp,\Exc,\Erecl,\Rar)
	(s,\gamma).  
	\end{equation*}	      
\end{lem}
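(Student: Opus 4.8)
The strategy is to track each of the four set-valued statistics on the pair $(s,\gamma)\mapsto(s',\gamma')=\rho_1(s,\gamma)$, handling the ``easy'' three first and leaving $\Cyc$ versus $\Arecp$ for last. First I would observe that $\rho_1$ never touches the underlying Motzkin word beyond swapping $\lb\leftrightarrow\lc$: indeed, inspecting cases (i)--(v) in \S\ref{algo-rho1}, a step of type $\su$ or $\la$ stays the same type, a step of type $\sd$ stays of type $\sd$, and only the two types $\lb$ and $\lc$ can be interchanged (and exactly when a certain $\eta$-value equals $1$). Since both $\lb$ and $\lc$ are level steps that do not change the height, and since $\Excp$ (resp.\ $\Excl$) depends only on which steps are $\su$ or $\la$ (resp.\ $\sd$ or $\la$) by \eqref{def:excpl}, we get $\Exc(s',\gamma')=\Exc(s,\gamma)$ immediately. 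Likewise $\Erecl(s',\gamma')=\Erecl(s,\gamma)$: this statistic only sees steps of type $\sd$ or $\la$ together with their $\xi$-coordinate, and in cases (i)--(ii) and (v) we have both $s_i'=s_i$ and $\xi_i'=\xi_i$, so the membership condition ``$s_i\in\{\sd,\la\}$ and $\xi_i=1$'' is preserved verbatim.

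Next I would handle $\Rar$. By definition $\Rar(s,\gamma)=\{i:s_i=\lc,\ \eta_i=1\}$. Among the $\lc$-steps of $(s,\gamma)$, case (iv) tells us that $s_i=\lc$ with $\eta_i=1$ becomes $s_i'=\lc$ with $\eta_i'=1$, while $s_i=\lc$ with $\eta_i>1$ becomes $s_i'=\lb$. Conversely, a new $\lc$-step in $(s',\gamma')$ can only arise from case (iv)(a) (from an old $\lc$ with $\eta_i=1$) or from case (iii) with $\eta_i=1$ (an old $\lb$ becoming $\lc$ with $\eta_i'=h_{i-1}+1>1$). So the only $\lc$-steps of $(s',\gamma')$ carrying $\eta_i'=1$ are exactly those coming from $\lc$-steps of $(s,\gamma)$ with $\eta_i=1$; hence $\Rar(s',\gamma')=\Rar(s,\gamma)$.

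The main obstacle is the equality $\Cyc(s',\gamma')=\Arecp(s,\gamma)$. Here I would first rewrite both sides in graph-theoretic language using the bipartite-digraph picture of \S2. On the source side, by \eqref{def:arecp} and (since $\theta$ is a bijection and Lemma~\ref{theta:properties} applies to any Laguerre history realized as $\theta(\sigma)$) by Lemma~3, $i\in\Arecp(s,\gamma)$ is equivalent to $\lownest(i,\sigma)=0$, i.e.\ in the $i$-th restriction $g_i$ the vertex $\sigma(i)$ (the one $i$ points to) is the leftmost vacant bottom vertex — equivalently $s_i\in\{\sd,\lb,\lc\}$ with $\eta_i=1$. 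On the target side, by \eqref{def:cyc} and Lemma~\ref{cycle pattern}, $i\in\Cyc(s',\gamma')$ iff in $g_i'$ the index $i$ closes a cycle, i.e.\ either $i\to i'$ is an edge ($s_i'=\lc$) or there is a path $i\to i_1',\,i_1\to i_2',\ldots,i_k\to i'$; by the construction of case (v), this happens for a $\sd$-step precisely when the target vacant bottom vertex $v_r'$ reached by the forced chain from $i'$ is the position that will receive the edge from $i$, which is recorded by the condition $\eta_i'=\eta_i^*$. So I must prove: for each $i$, $s_i\in\{\sd,\lb,\lc\}$ and $\eta_i=1$ in $(s,\gamma)$ $\iff$ $i$ closes a cycle in $g_i'$. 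I would split this by the type of $s_i$: the $\lb$ and $\lc$ cases are settled by (iii)--(iv), where ``$\eta_i=1$'' is transformed exactly into ``$s_i'=\lc$,'' i.e.\ ``$i\to i'$ is an edge of $g_i'$.'' The genuine content is the $\sd$ case, where one must show $\eta_i=1\iff\eta_i'=\eta_i^*$; the direction $\eta_i=1\Rightarrow\eta_i'=\eta_i^*$ is the first line of \eqref{V}, and for the converse one checks that if $\eta_i>1$ then $\eta_i'\in\{\eta_i,\eta_i-1\}$, and in either subcase $\eta_i'\ne\eta_i^*$ because $\eta_i\le\eta_i^*$ forces $\eta_i'=\eta_i-1<\eta_i^*$ while $\eta_i>\eta_i^*$ forces $\eta_i'=\eta_i>\eta_i^*$. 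The subtle point I expect to spend the most care on is verifying that $\eta_i^*$ — the position of the terminal vacant bottom vertex of the forced chain, computed in the \emph{modified} graph $g_i'$ — is well-defined and is exactly the ``lower nesting index'' that makes the old condition $\eta_i=1$ (lowest vacant bottom slot in $g_i$) correspond to ``cycle-closing'' in $g_i'$; this is where the inductive hypothesis that $g_{i-1}'$ has been built correctly, together with the height bookkeeping of Lemma~4, does the work.
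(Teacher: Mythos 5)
Your proposal is correct and follows essentially the same route as the paper's proof: reduce each of the four statistics to conditions on the pairs $(s_i,\gamma_i)$, observe that $\rho_1$ only swaps $\lb\leftrightarrow\lc$ and preserves the $\xi$-coordinates, and handle $\Cyc$ versus $\Arecp$ by the case analysis (iii)--(v). Your explicit verification in the $\sd$-case that $\eta_i=1\iff\eta_i'=\eta_i^*$ via the three branches of \eqref{V} is exactly the content the paper compresses into the citation of \S\ref{algo-rho1}~(v), so if anything your write-up is slightly more detailed at the one genuinely nontrivial point.
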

\begin{proof} 
By definition of $\rho_1$ in \S~\ref{algo-rho1}, if 
	$\rho_1(s_i,\gamma_i)=(s_i',\gamma_i')$ for $1\leq i\leq n$, then 
	$s_i=s_i'$ for  $s_i\in \{\su, \sd, \la\}$ and 
		$
		\{i: s_i=\lb\,\;\textrm{or}\,\;\lc\}=\{i: s'_i=\lb\,\;\textrm{or}\,\;\lc\}
		$.
	\begin{enumerate}
		\item 
		As $\Excp(s,\gamma)=\{i: s_i=\su\;\text{or}\; \la\}$ and 
		$\Excl(s,\gamma)=\{i: s_i=\sd\;\text{or}\; \la\}$, see  \eqref{def:arecp}-\eqref{def:cyc}, we have 
		$\Exc (s',\gamma')=\Exc (s,\gamma)$.
		\item  Recall that 
		$\Arecp (s,\gamma):=\{i: s_i\in \{\sd,  \lb, \lc\}\; \text{and}\; \eta_i=1\}$
		and  $\Cyc(s',\gamma')=\{i: s'_i=\lc \,\text{or}\, s'_i=\sd \; \text{with}\; i\to i_{1}', i_{1}\to i_{2}', \ldots,i_{k}\to i'\; \text{in}\; g'_i\}$ (cf. \eqref{def:arecp} and \eqref{def:cyc}).
		\begin{itemize}
			\item 
	        $(s_i,\gamma_i)=(\lb,(\Delta,1))$ or $(\lc,(\Delta,1))$ 
			if and only if $(s'_i,\gamma'_i)=(\lc,(\Delta,h_{i-1}+1))$, in other word there is an edge $i\to i'$ in $g_i'$ by (iii) and (iv).
			\item  $(s_i,\gamma_i)=(\sd,(\xi_i,1))$
			if and only if
			$s'_i=\sd$,
			$\xi'_i=\xi_i$,
			and in $g_i'$
			there are edges $i\to i_{1}', i_{1}\to i_{2}', \ldots,i_{k}\to i'$ by \S~\ref{algo-rho1} (v).
		\end{itemize}
		Thus $\Cyc (s',\gamma')=\Arecp (s,\gamma)$.
		\item Recall that 
		 $\Erecl(s,\gamma):=\{i: (s_i, \xi_i)=(\sd, 1)\,\text{or}\,  (\la, 1)\}$.
		By \S~\ref{algo-rho1} (ii)(v) the latter is equivalent to
		$(s'_i,\xi'_i)=(\sd, 1)\,\text{or}\,  (\la, 1)\}$.
		So $\Erecl (s',\gamma')=\Erecl (s,\gamma)$.
		\item 
		By \S~\ref{algo-rho1} (iv),
		$(s_i,\gamma_i)=(\lc,(\Delta,1))$
		if and only if $(s'_i,\gamma'_i)=(\lc,(\Delta,1))$.  Thus 
		$\Rar (s',\gamma')=\Rar (s,\gamma)$.
	\end{enumerate}
\end{proof}

\begin{lem}\label{rho1:moreproperties}
	For $(s,\gamma)\in \L_n$, if   $\rho_1(s,\gamma)=(s',\gamma')$, then
	$\Erecp(s',\gamma')
	=\Erecp(s,\gamma)$.        
\end{lem}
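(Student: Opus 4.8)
\textbf{Proof plan for Lemma~\ref{rho1:moreproperties}.}
The plan is to reduce the statement about $\Erecp$ to the already-established facts about $\Erecl$ and $\Exc$ together with a tracking of edges of the bipartite digraph. Recall from \eqref{def:cyc} that $\Erecp(s,\gamma)=\{i:\, i\to j'\text{ in }g,\ j\in\Erecl(s,\gamma)\}$, so that $\Erecp$ is completely determined by the edge set of the bipartite digraph $g$ and the set $\Erecl$. By Lemma~\ref{rho1:properties} we already know $\Erecl(s',\gamma')=\Erecl(s,\gamma)$; hence it suffices to show that for every $j\in\Erecl(s,\gamma)$, the unique top-vertex $i$ with $i\to j'$ an edge of $g$ coincides with the unique top-vertex with $i\to j'$ an edge of $g'$. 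In other words, the only thing to prove is that $\rho_1$ does not disturb those edges of the bipartite digraph that point into a column $j'$ with $j\in\Erecl$.

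First I would set up notation: for $(s,\gamma)\in\L_n$ with digraph $g$, an edge $i\to j'$ is created at step $\max(i,j)$ of Algorithm $\theta$ (bis), namely when the larger of the two endpoints is added; the endpoint $j'$ at the bottom row receives its incoming edge either at step $j$ (if $j<i$, when $i$ is a cycle valley or double rise and $\sigma^{-1}$-type data point to a vacant bottom vertex to its left) or at step $i$ with $i=j$ or $i>j$. The key observation is the characterization of $\Erecl$ in terms of Laguerre-history data: $j\in\Erecl(s,\gamma)$ iff $s_j\in\{\sd,\la\}$ and $\xi_j=1$, i.e.\ $j$ is a cycle peak or double rise whose preimage $\sigma^{-1}(j)$ has \emph{no} vacant top-vertex to its left at step $j$. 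I would then examine each of the five cases (i)--(v) in \S\ref{algo-rho1} and check which edges of $g'$ differ from those of $g$. Cases (i)--(ii) ($s_i=\su$ or $\la$) leave $(s_i,\gamma_i)$ untouched and create exactly the same edge(s); cases (iii)--(iv) act on $\lb,\lc$ steps, which create no top-to-bottom edge into a "new" column in a way affecting a letter of $\Erecl$ — one checks the bottom vertex involved is a fixed point or a double fall, never a $j$ with $j\in\Erecl$. The only substantive case is (v), $s_i=\sd$: here $\rho_1$ keeps $s_i'=\sd$ and $\xi_i'=\xi_i$, so the \emph{incoming} edge to the top vertex $i$ (coming from whichever earlier step attached to it) is preserved, and more importantly the \emph{outgoing} edge $i\to\,?$ is redirected, but the redirection only changes which bottom vertex $i$ points to among the \emph{vacant} ones — and a vacant bottom vertex at step $i$ corresponds to a $j>i$ that will later be a cycle valley or double fall at its own creation step, hence $j\notin\Erecl$ in general; conversely a bottom vertex $j'$ with $j\in\Erecl$ has $j\le i$-type incoming structure that is governed by $\xi_j=1$, a quantity $\rho_1$ leaves alone.

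The main obstacle, and the step I would spend the most care on, is making precise the claim that the outgoing-edge surgery in case (v) never alters an edge $i\to j'$ with $j\in\Erecl(s,\gamma)$. The subtlety is that $\rho_1$ in case (v) reconnects $i'$ to the $\xi_i$-th vacant \emph{top} vertex and then follows a chain $i'\to v_1,\ v_1'\to v_2,\dots$ to a vacant bottom vertex $v_r'$; this reshuffles several edges at once, not just one. I would argue that the set of bottom vertices $j'$ touched by this reshuffling is precisely the set of currently-vacant bottom vertices together with the old target of $i$, and that in both $g$ and $g'$ the top-neighbour of such a $j'$ is determined only once we reach step $j$ and read off $\xi_j$ — and since $\rho_1$ preserves $(s_j,\xi_j)$ for all $j$ (it only ever modifies $\eta$-data and $\lb/\lc$ labels), the value $\xi_j$ and hence the "$j\in\Erecl$?" test is identical in $g$ and $g'$, while for such $j$ the incoming edge to $j'$ is placed at step $j$ using exactly the $\xi_j$ information, so it lands on the same top vertex. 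A clean way to package this: prove by induction on $i$ that $g_i$ and $g_i'$ have the same set of edges $\{k\to j':\, j\in\Erecl,\ k,j\le i\}$ and the same set of vacant top and bottom vertices, using Lemma~\ref{cycle pattern} and the case analysis above; the $\Erecl$-edges are never the ones moved because moving an edge into $j'$ happens only when $j$ is "opened" as a future cycle valley/double fall, i.e.\ a $j$ with $s_j\in\{\su,\sd,\la\}$ where the relevant $\eta$-data (not $\xi$-data) is consulted. Once that invariant is in hand, $\Erecp(s',\gamma')=\{i:\, i\to j',\ j\in\Erecl(s',\gamma')\}=\{i:\, i\to j',\ j\in\Erecl(s,\gamma)\}=\Erecp(s,\gamma)$ follows immediately.
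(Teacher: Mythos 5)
Your overall strategy is the same as the paper's: reduce $\Erecp$-preservation to showing that the edge entering each bottom vertex $j'$ with $j\in\Erecl$ is unchanged, observe that $\rho_1$ preserves all $\xi$-data and the positions of $\su,\sd,\la$, and run an induction on the bipartite digraphs to conclude that the $\xi_j$-th vacant top vertex at step $j$ is the same in $g$ and $g'$. The paper does exactly this, phrased slightly more globally: it sets $P=\Excp$, $L=\Excl$ (both preserved), shows by induction that the sets $E_i$ of vacant \emph{top-row} vertices of $g_{v_i-1}$ and $g'_{v_i-1}$ coincide for every $v_i\in L$, and concludes since $\xi_{v_i}=\xi'_{v_i}$.

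There is, however, one step in your write-up that would fail as stated: your proposed induction invariant asserts that $g_i$ and $g_i'$ have ``the same set of vacant top \emph{and bottom} vertices.'' The bottom-row half of this is false in general, and indeed it must be, since the whole point of $\rho_1$ is to redirect the outgoing edges of cycle peaks and double falls (it changes the $\eta$-data and swaps $\lb\leftrightarrow\lc$), and each such redirection fills a different vacant bottom vertex. In the running example one already has, at step $8$, vacant bottom sets $\{6',7',8'\}$ for $g_8$ but $\{3',5',6'\}$ for $g_8'$. Only their common cardinality $h_i$ is preserved. If you try to carry the invariant as you stated it through the induction, it breaks at the first step where $\eta_i\neq\eta_i'$. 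The fix is simply to drop the bottom-row clause: the evolution of the vacant \emph{top}-row set depends only on which steps are $\su/\la$ (a top vertex is opened) and which are $\sd/\la$ together with the value $\xi_i$ (the $\xi_i$-th vacant top vertex is closed), all of which $\rho_1$ preserves; that weaker invariant is what the argument actually needs and is exactly the paper's $E_i=E_i'$. A smaller inaccuracy: in case (v) the chain $i'\to v_1,\ v_1'\to v_2,\dots$ is only a read-off used to compute $\eta_i^*$; no ``reshuffling of several edges'' occurs --- step $i$ adds exactly two edges (one into $i'$, governed by $\xi_i$, and one out of $i$, governed by $\eta_i'$), so your worry there is moot, but the phrasing ``incoming edge to the top vertex $i$'' should read ``edge into the bottom vertex $i'$.''
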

	\begin{proof} 
	Let $g$ and $g'$ denote  the bipartite digraphs
	 corresponding to $(s,\gamma)$ and $(s',\gamma')$. 
	 The construction  of $\rho_1$ shows that   $\xi_i'=\xi_i$
		for $1\leq i\leq n$.
	 Let  (cf. Lemma \ref{rho1:properties})
		\begin{align}
		 P:&=\Excp(s',\gamma')=\Excp(s,\gamma)=\{u_1,\ldots,u_t\}_{<} \label{P1}\\
		L:&=\Excl(s',\gamma')=\Excl(s,\gamma)=\{v_1,\ldots,v_t\}_{<}.\label{L1}
		\end{align}
		Since  $\Erecl(s,\gamma)\subset L$ (see (2.3) and  (2.6)) and 
        $\Erecl(s',\gamma')
		=\Erecl(s,\gamma)$ (see  Lemma~8),
		 it suffices to prove  that 
		$u_j\to v_i'$  is an edge in  $g$  with 
		$(u_j, v_i)\in  P\times L$ if and only if $u_j\to v_i'$  is an edge in $g'$.
		Let $E_i$  (resp. $E'_i$) be the set of vacant vertices at the top row of 
		$g_{v_i-1}$ (resp. $g'_{v_i-1}$).
		We show that $E_i=E'_i$ for all $v_i\in L$
		for $i=1,\ldots,t$.
		\begin{itemize}
		\item For $i=1$,  a vertex $v$  at the top row of $g_{v_1-1}$ 
		(resp. $g'_{v_1-1}$)  is 
		vacant if and only if $v\in P$ and $v<v_1$. Thus 
		$E_1=E_1'$ by \eqref{P1}.  
		\item 
		Assume $E_{i-1}=E_{i-1}'$.
		 If $u_j\to v_{i-1}'$ is an edge  in  $g_{v_{i-1}}$, then it is also in $g'$ because 
		  $\xi_{v_{i-1}}=\xi'_{v_{i-1}}$. Thus
		  $
		  E_i=\left(E_{i-1}\setminus \{u_j\}\right)\cup \{v\in P: v_{i-1}\leq v\leq v_i-1\}.
		  $
		So $E_{i}=E_{i}'$.
			\end{itemize}
					
					Since $\xi_{v_i}=\xi'_{v_i}$ for $v_i\in L$,
					each $v_i'$ is connected to the same  vertex 
		in both $g$ and $g'$. 
	\end{proof}
	
	By Lemma~8 and 9,
	for $(s,\gamma)\in \L_n$, if   $\rho_1(s,\gamma)=(s',\gamma')$, then
	\begin{equation*}
	(\Cyc ,\Exc ,\Erec ,\Rar)(s',\gamma')
	=(\Arecp,\Exc,\Erec,\Rar)
	(s,\gamma).  
	\end{equation*}	      

\begin{figure}[t]
	\begin{tikzpicture}[scale=0.8]
	\node[above] at (1,0.5) {$1$};
	\node[above] at (2,0.5) {$2$};
	\node[above] at (3,0.5) {$3$};
	\node[above] at (4,0.5) {$4$};
	\node[above] at (5,0.5) {$5$};
	\node[above] at (6,0.5) {$6$};
	\node[above] at (7,0.5) {$7$};
	\node[above] at (8,0.5) {$8$};
	\node[above] at (9,0.5) {$9$};
	\node[above] at (10,0.5) {$10$};
	\node[above] at (11,0.5) {$11$};
	\node[above] at (12,0.5) {$12$};
	\node[above] at (13,0.5) {$13$};
	\node[above] at (14,0.5) {$14$};
	\node[above] at (15,0.5) {$15$};
	\node[above] at (16,0.5) {$16$};
	\node[above] at (17,0.5) {$17$};
	\fill (1,0.5) circle (2.5pt);
	\fill (2,0.5) circle (2.5pt);
	\fill (3,0.5) circle (2.5pt);
	\fill (4,0.5) circle (2.5pt);
	\fill (5,0.5) circle (2.5pt);
	\fill (6,0.5) circle (2.5pt);
	\fill (7,0.5) circle (2.5pt);
	\fill (8,0.5) circle (2.5pt);
	\fill (9,0.5) circle (2.5pt);
	\fill (10,0.5) circle (2.5pt);
	\fill (11,0.5) circle (2.5pt);
	\fill (12,0.5) circle (2.5pt);
	\fill (13,0.5) circle (2.5pt);
	\fill (14,0.5) circle (2.5pt);
	\fill (15,0.5) circle (2.5pt);
	\fill (16,0.5) circle (2.5pt);
	\fill (17,0.5) circle (2.5pt);
	\fill (1,-1) circle (2.5pt);
	\fill (2,-1) circle (2.5pt);
	\fill (3,-1) circle (2.5pt);
	\fill (4,-1) circle (2.5pt);
	\fill (5,-1) circle (2.5pt);
	\fill (6,-1) circle (2.5pt);
	\fill (7,-1) circle (2.5pt);
	\fill (8,-1) circle (2.5pt);
	\fill (9,-1) circle (2.5pt);
	\fill (10,-1) circle (2.5pt);
	\fill (11,-1) circle (2.5pt);
	\fill (12,-1) circle (2.5pt);
	\fill (13,-1) circle (2.5pt);
	\fill (14,-1) circle (2.5pt);
	\fill (15,-1) circle (2.5pt);
	\fill (16,-1) circle (2.5pt);
	\fill (17,-1) circle (2.5pt);
	\node[below] at (1,-1) {$1'$};
	\node[below] at (2,-1) {$2'$};
	\node[below] at (3,-1) {$3'$};
	\node[below] at (4,-1) {$4'$};
	\node[below] at (5,-1) {$5'$};
	\node[below] at (6,-1) {$6'$};
	\node[below] at (7,-1) {$7'$};
	\node[below] at (8,-1) {$8'$};
	\node[below] at (9,-1) {$9'$};
	\node[below] at (10,-1) {$10'$};
	\node[below] at (11,-1) {$11'$};
	\node[below] at (12,-1) {$12'$};
	\node[below] at (13,-1) {$13'$};
	\node[below] at (14,-1) {$14'$};
	\node[below] at (15,-1) {$15'$};
	\node[below] at (16,-1) {$16'$};
	\node[below] at (17,-1) {$17'$};
	\draw [>=stealth,  thick,  ->](1,0.5)--(4,-1);
	\draw [>=stealth,  thick,  ->](2,0.5)--(9,-1);
	\draw [>=stealth,  thick,  ->](3,0.5)--(2,-1);
	\draw [>=stealth,  thick,  ->](4,0.5)--(11,-1);
	\draw [>=stealth,  thick,  ->](5,0.5)--(1,-1);
	\draw [>=stealth,  thick,  ->](6,0.5)--(10,-1);
	\draw [>=stealth,  thick,  ->](7,0.5)--(7,-1);
	\draw [>=stealth,  thick,  ->](8,0.5)--(8,-1);
	\draw [>=stealth,  thick,  ->](9,0.5)--(3,-1);
	\draw [>=stealth,  thick,  ->](10,0.5)--(5,-1);
	\draw [>=stealth,  thick,  ->](11,0.5)--(6,-1);
	\draw [>=stealth,  thick,  ->] (12,0.5)--(12,-1);
	\draw [>=stealth,  thick,  ->](13,0.5)--(16,-1);
	\draw [>=stealth,  thick,  ->](14,0.5)--(17,-1);
	\draw [>=stealth,  thick,  ->](15,0.5)--(15,-1);
	\draw [>=stealth,  thick,  ->](16,0.5)--(13,-1);
	\draw [>=stealth,  thick,  ->][->] (17,0.5)--(14,-1);
	\end{tikzpicture}
	\caption{ \small The  graph of $\varphi(\sigma)=(1,4,11,6,10,5)(7)(8)(2,9,3)(12)(13,16)(14,17)(15)$ }\label{graph-g1}
\end{figure}
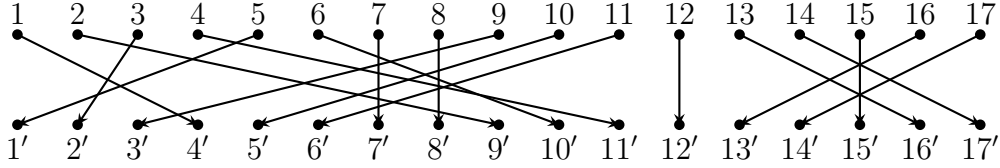

\begin{figure}[t]
	\begin{center}
		\begin{tikzpicture}[scale=0.9]
		\draw[step=1cm, gray, thick,dotted] (0, 0) grid (17,4);
		
		\draw[black] (0, 0)--(1, 1)--(2, 2)--(3, 2)--(4, 2)--(5, 2)--(6, 3)--(7, 3)--(8, 3)--(9, 2)--(10, 1)--(11, 0)--(12, 0)--(13, 1)--(14, 2)--(15, 2)--(16,1)--(17,0);
		\draw[black] (0,0) node {$\bullet$};
		\draw[black] (1,1) node {$\bullet$};
		\draw[black] (2,2) node {$\bullet$};
		\draw[black] (3,2) node {$\bullet$};
		\draw[black] (4,2) node {$\bullet$};
		\draw[black] (5,2) node {$\bullet$};
		\draw[black] (6,3) node {$\bullet$};
		\draw[black] (7,3) node {$\bullet$};
		\draw[black] (8,3) node {$\bullet$};
		\draw[black] (9,2) node {$\bullet$};
		\draw[black] (10,1) node {$\bullet$};
		\draw[black] (11,0) node {$\bullet$};
		\draw[black] (12,0) node {$\bullet$};
		\draw[black] (13,1) node {$\bullet$};
		\draw[black] (14,2) node {$\bullet$};
		\draw[black] (15,2) node {$\bullet$};
		\draw[black] (16,1) node {$\bullet$};
		\draw[black] (17,0) node {$\bullet$};
		
		\draw [-,thin, blue](3,2)--(4,2);
		\draw [-,thin, black](2,2)--(3,2);
		\draw [-,thin, red](6,3)--(7,3);
		\draw [-,thin, red](7,3)--(8,3);
		\draw [-,thin, red](14,2)--(15,2);
		\draw [-,thin, black](4,2)--(5,2);
		\draw [-,thin, red](11,0)--(12,0);
		
		\tiny{
			\draw[black] (0.5,-0.5) node {$(\Delta,\Delta)$};
			\draw[black] (1.5,-0.5) node {$(\Delta,\Delta)$};
			\draw[black] (2.5,-0.5) node {$(\Delta,2)$};
			\draw[black] (3.5,-0.5) node {$(1,\Delta)$};
			\draw[black] (4.5,-0.5) node {$(\Delta,1)$};
			\draw[black] (5.5,-0.5) node {$(\Delta,\Delta)$};
			\draw[black] (6.5,-0.5) node {$(\Delta,4)$};
			\draw[black] (7.5,-0.5) node {$(\Delta,4)$};
			\draw[black] (8.5,-0.5) node {$(1,1)$};
			\draw[black] (9.5,-0.5) node {$(2,1)$};
			\draw[black] (10.5,-0.5) node {$(1,1)$};
			\draw[black] (11.5,-0.5) node {$(\Delta,1)$};
			\draw[black] (12.5,-0.5) node {$(\Delta,\Delta)$};
			\draw[black] (13.5,-0.5) node {$(\Delta,\Delta)$};
			\draw[black] (14.5,-0.5) node {$(\Delta,3)$};
			\draw[black] (15.5,-0.5) node {$(1,1)$};
			\draw[black] (16.5,-0.5) node {$(1,1)$};
			
			\draw[black] (2.5,2.5) node {$L_b$};
			\draw[black] (3.5,2.5) node {$L_a$};
			\draw[black] (4.5,2.5) node {$L_b$};
			\draw[black] (6.5,3.5) node {$L_c$};
			\draw[black] (7.5,3.5) node {$L_c$};
			\draw[black] (11.5,0.5) node {$L_c$};
			\draw[black] (14.5,2.5) node {$L_c$};
		}	
		\end{tikzpicture}
		\caption{ \small The Laguerre history of  $\varphi(\sigma)$}
	\end{center}
\end{figure} 

Let $\varphi:=\theta^{-1}\circ\rho_1\circ\theta$. 
By Lemmas~\ref{theta:properties},  \ref{rho1:properties} and 9,   we see that 
$\varphi$ is the desired bijection from $\SS_n$ onto itself for Theorem~1.
\begin{ex}
For our running example $\sigma$,
we get  $\omega:=\varphi(\sigma)$ with $$\omega=\left(%
\begin{array}{cccccccccccccccccc}
1 & 2 & 3 & 4 & 5 & 6 & 7 & 8 & 9 & 10 & 11 & 12 & 13 & 14 & 15 & 16 & 17 \\
4 & 9 & 2 & 11 & 1 & 10 & 7 & 8 & 3 & 5 & 6 & 12 & 16 & 17 & 15 & 13 & 14  \\
\end{array}%
\right).$$
We have 
\begin{align*}
\Rec\omega&=(\{
1, \,2, \,4, \,12, \,13, \,14\},\,
\{4, \,9, \,11, \,12, \,16, \,17
\})\\
\Arec\omega&=(\{
5, \,9, \,10, \,11, \,12, \,16, \,17\},\,
\{1, \,3, \,5, \,6, \,12, \,13, \,14
\})\\
\Erec\omega&=(\{1, \,2, \,4, \,13, \,14\},\, \{4, \,9, \,11, \,16, \,17\})\\
\Exc\omega&=(\{
1,  \,2, \,4, \,6, \,13, \,14\},\, 
\{4, \,9, \,11, \,10, \,16, \,17\})
\end{align*}
and $\Rar\omega=\{12\}$.
Also, the cycle decomposition of $\omega$   is 
\begin{equation*}
\omega=(1,4,11,6, 10, 5)\,(7)\,(8)\,(2,9,3)\,(12)\,(13,16)\,(14,17)\,(15),
\end{equation*}
so,
$\Cyc\omega=\{7,8,9,11,12,15,16, 17\}$,  
$\Cpeak\omega=\{9,10,11,16,17\}$,
$\Cval\omega=\{1,2,6,13,14\}$,
$\Cdrise\omega=\{4\}$,
$\Cdfall\omega=\{3,5\}$,  and  
$\Fix\omega=\{7,8,12,15\}$. 

\end{ex}

\section{Proof of Theorem 2}
\subsection{Algorithm $\rho_2$}\label{def:rho2}
For $(s, \gamma)\in \L_n$, we define $\rho_2(s, \gamma)=(s',\gamma')$
through 
the corresponding bipartite digraphs $(g_i, g'_i)$ for $1\leq i\leq n$ as follows.
\begin{itemize}
\item [(i)-(ii)] 
		If $s_i=\su$ or $\la$, then $(s_i', \gamma')=(s_i, \gamma_i)$.
\item [(iii)] 
		If  $s_i=\lb$ and $\gamma_i=(\Delta,\eta_i)$,
		 then 
		 \begin{align}
		 (s_i', (\xi'_i, \eta_i'))=\begin{cases}
		 (\lc, (\Delta, h_{i-1}+1))& \text{if $\eta_i=1$}\\
		 (\lb, (\Delta, \eta_i))& \text{if $\eta_i>1$}.
		 \end{cases}
		 \end{align}
		 Note that $\eta_i'>1$ because  $h_{i-1}\geq 1$.
\item [(iv)] 
	    If  $s_i=\lc$ and $\gamma_i=(\Delta, h_{i-1}+1)$,
	     then 
	     \begin{align}
		 (s_i', (\xi'_i, \eta_i'))=\begin{cases}
		 (\lc, (\Delta,1))& \text{if $\eta_i=1$}\\
		 (\lb, (\Delta, 1))& \text{if $\eta_i>1$}.
		 \end{cases}
		 \end{align}	    
\item [(v)] 
    If  $s_i=\sd$ and $\gamma_i=(\xi_i, \eta_i)$, then $s_i'=\sd$. We use the 
    corresponding bipartite diagraphs $g_i$ and $g_i'$ to define 
    $\eta_i'$ and  $\xi_i'$.
			\begin{itemize}	
	\item Definition of $\eta'_i$. There are edges  $i\to v_1', v_1\to v_2', \ldots,v_{s-1}\to v_s'$
		in $g_i$ such that  $v_s$ is vacant or $v_s\to i'$.
        Assume that there are  $\xi^{*}_i-1$ vacant vertices at the left of 
         $v_s$  at the top row  
        of  $g_{i}$.   Let
        \begin{align}\label{V2-xi}
		\eta_i'=\begin{cases}
		1&\text{if $\xi_i=\xi_i^*$}\\
		\xi_i&\text{if $\xi_i>\xi_i^*$}\\
		\xi_i+1&\text{if $1\leq \xi_i<\xi_i^*$}.
		\end{cases}
		\end{align}
  	\item Definition of $\xi'_i$.  
	Connecting vertex $i$ to 
	the $\eta'_i$-th vacant vertex $p_1'$ at the bottom row of  $g'_{i-1}$ yeilds  edges  $i\to p_1', p_1\to p_2', \ldots,p_{t-1}\to p_t'$ such that $p_t$ is vacant in $g'_{i-1}$. 
	Assume that $p_t$ is 
	the $\eta^*_i$-th vacant vertex at the top row of $g'_{i-1}$. Let
	\begin{align}\label{V2-eta}
		\xi_i'=\begin{cases}
		\eta_i^*&\text{if $\eta_i=1$}\\
		\eta_i&\text{if $\eta_i>\eta_i^*$}\\
		\eta_i-1&\text{if $1<\eta_i\leq \eta_i^*$}.
		\end{cases}
		\end{align}
	\end{itemize}
	\end{itemize}
Set $\rho_2(s, \gamma)=(s', \gamma')$, where $s'=(s_1', \ldots, s_n')$ and
$\gamma'=(\gamma_1', \ldots, \gamma_n')$.

\begin{lem}
    The mapping $\rho_2: \L_n\to \L_n$ is an involution.
\end{lem}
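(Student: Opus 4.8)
The plan is to show that $\rho_2\circ\rho_2=\id_{\L_n}$ by proving, via induction on $i$, that if $(s',\gamma')=\rho_2(s,\gamma)$ and $(s'',\gamma'')=\rho_2(s',\gamma')$ then $(s''_i,\gamma''_i)=(s_i,\gamma_i)$ for all $i$, and moreover that the bipartite digraphs satisfy $g''_i=g_i$. The cases $s_i\in\{\su,\la\}$ are trivial since $\rho_2$ fixes such steps and leaves the graph unchanged there. The cases $s_i=\lb$ and $s_i=\lc$ are paired: from the formulas in (iii)--(iv) one checks directly that $(\lb,(\Delta,1))\leftrightarrow(\lc,(\Delta,h_{i-1}+1))$ and $(\lc,(\Delta,1))\leftrightarrow(\lc,(\Delta,1))$ and $(\lb,(\Delta,\eta_i))\leftrightarrow(\lb,(\Delta,\eta_i))$ for $\eta_i>1$, using that $h_{i-1}$ is the same in $g$ and $g'$ at those positions (it is determined by the Motzkin profile, which $\rho_2$ preserves). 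So the only real content is the down-step case (v).

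For a down step $s_i=\sd$, the key observation is that $\rho_2$ swaps the roles of $\xi$ and $\eta$: the new $\eta'_i$ is computed from the \emph{old} $\xi_i$ together with an index $\xi^*_i$ read off from $g_i$ (the rank of the terminal vacant top-vertex of the chain $i\to v_1'\to\cdots\to v_s$), while the new $\xi'_i$ is computed from the \emph{old} $\eta_i$ together with an index $\eta^*_i$ read off from $g'_{i-1}$. When we apply $\rho_2$ a second time, $\xi''_i$ is computed from $\eta'_i$ and an index read off from $g'_i$, and $\eta''_i$ is computed from $\xi'_i$ and an index read off from $g''_{i-1}$. The crucial pair of lemmas to isolate is: (a) the top-chain index $\xi^*_i$ attached to vertex $i$ in $g_i$ equals the index $\eta^*_i$ that appears when reinserting vertex $i$ in $g'_{i-1}$ (both count vacancies along "the same" chain traversed in opposite directions — this is exactly the symmetry phenomenon already noted in the Remark after Lemma~7 for $\rho_1$, where $\eta^*_i=\tilde\eta^*_i$); and symmetrically (b) the bottom-chain index used in the second application equals the one used in the first. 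Granting (a)--(b), the piecewise formulas \eqref{V2-xi} and \eqref{V2-eta} are mutually inverse in each of their three branches (the three-way case split "$=$ / $>$ / $1<\cdot\le$" is designed precisely so that $\xi\mapsto\eta'\mapsto\xi''=\xi$), which closes the induction; simultaneously one checks the chain of edges attached at step $i$ is the same in $g''_i$ as in $g_i$, so the inductive hypothesis $g''_{i-1}=g_{i-1}$ propagates to $g''_i=g_i$.

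The main obstacle is establishing the graph-theoretic identity in (a)--(b): one must argue that the partial digraph $g'_{i-1}$ produced after $i-1$ steps of $\rho_2$, when we attempt to reinsert vertex $i$ as a down step, presents exactly the "mirror" chain of the one used to define $\eta'_i$ from $g_i$, so that the vacancy counts match. This requires carrying the right strengthened inductive hypothesis — not merely $(s''_k,\gamma''_k)=(s_k,\gamma_k)$ for $k<i$, but the statement that $g'_{k}$ (the intermediate graph) is the ``transpose'' of $g_k$ in the sense that reading edges of $g$ from top to bottom corresponds to reading edges of $g'$ from bottom to top along cycle-chains, with vacant vertices in matching positions. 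Once this bookkeeping is set up cleanly, each of the three branches of the case analysis is a short check. I would therefore organize the proof as: first state and prove the graph-mirror lemma by induction on $i$ (handling $\su,\la,\lb,\lc$ routinely and $\sd$ using the definition of $\rho_2$ in \S\ref{def:rho2}); then deduce $\xi^*_i=\eta^*_i$ (second application) and its counterpart; then verify the three arithmetic branches of \eqref{V2-xi}--\eqref{V2-eta} compose to the identity; conclude $\rho_2^2=\id$.
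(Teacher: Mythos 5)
Your skeleton coincides with the paper's: induction on $i$, routine treatment of $\su,\la,\lb,\lc$, and reduction of the down-step case to an equality of auxiliary starred indices that makes the two three-branch formulas defining $\eta_i'$ and $\xi_i'$ mutually inverse. However, the key identity you isolate as (a), namely $\xi_i^*=\eta_i^*$ (the chain index read off $g_i$ equals the reinsertion index read off $g'_{i-1}$, both in the \emph{first} application of $\rho_2$), is false. In the paper's running example take $i=10$, where $\gamma_{10}=(2,2)$: the chain $10\to 8'$, $8\to 3'$, $3\to 2'$, $2\to 9'$, $9\to 6'$, $6\to 10'$ in $g_{10}$ ends at $v_s=6$, which has exactly one vacant top vertex (namely $4$) to its left, so $\xi_{10}^*=2$; whereas reinserting $10$ at the $\eta'_{10}=1$st vacant bottom vertex $3'$ of $g'_{9}$ gives the chain $10\to 3'$, $3\to 2'$ ending at the first vacant top vertex $2$, so $\eta_{10}^*=1$. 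The two chains live in different graphs ($g$ versus $g'$) and are not the same path traversed backwards. For the same reason the strengthened invariant you propose --- that $g'_k$ is a ``transpose'' of $g_k$ along cycle-chains --- is not available: $\rho_2$ does not transpose the bipartite digraph (in the example $\Phi(\sigma)\neq\sigma^{-1}$); all it preserves is the Motzkin shape, hence the positions of the vacant vertices at each stage.

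The identities that are actually needed, and that the paper proves, compare the first application with the \emph{second}, not the two halves of a single application: writing $\tilde\xi_i^*,\tilde\eta_i^*$ for the indices arising when $\rho_2$ is applied to $(s',\gamma')$, one needs $\eta_i^*=\tilde\xi_i^*$ and $\xi_i^*=\tilde\eta_i^*$. The first holds because appending column $i$ to $g'_{i-1}$ according to $\eta'_i$ produces exactly $g'_i$, so the reinsertion chain defining $\eta_i^*$ is literally the chain in $g'_i$ defining $\tilde\xi_i^*$; this yields $\eta''_i=\eta_i$. Only after that, using the inductive hypothesis $g''_{i-1}=g_{i-1}$ together with $\eta''_i=\eta_i$, does one conclude that the reinsertion chain in $g''_{i-1}$ reproduces column $i$ of $g_i$, whence $\tilde\eta_i^*=\xi_i^*$ and $\xi''_i=\xi_i$. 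The argument is therefore asymmetric and order-dependent, unlike the symmetric ``same chain read in opposite directions'' picture underlying your step (a); as written, that step fails and the induction does not close.
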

\begin{proof} For  $(s,\gamma)\in \L_n$ let $\rho_2(s,\gamma)=(s',\gamma')$
	and
	$\rho_2(s',\gamma')=(s'',\gamma'')$.
	We show that  $(s_i'',\gamma_i'')=(s_i,\gamma_i)$
	by induction on $i$ with $1\leq i\leq n$.
	For $i=1$ we have  $(s_1,\gamma_1)=(\su, (\Delta, \Delta))$
	or $(\lc,(\Delta,1))$. It is clear from (1)--(iv) we have $(s_1'',\gamma_1'')=(s_1,\gamma_1)$. 
	Let $i\geq 2$ and 
	assume that  $((s_1'',\gamma_1''),\ldots,(s_{i-1}'',\gamma_{i-1}'')=((s_1,\gamma_1),\ldots,(s_{i-1},\gamma_{i-1}))
	$.
	For the cases (i)--(iv) it is easy to see that 
    $(s_i'',\gamma_i'')=(s_i, \gamma_i)$. 
    Here we just verify the case (v) with $s_i=D$ and $\gamma_i=(\sd,(\xi_i,\eta_i))$.	
  Then $s''_i=s_i'=D$,
    and the mapping $\rho_2$ (resp. $\rho_2\circ \rho_2$) provides  the indices  $\xi^*_i$ and $\eta^*_i$ (resp.  $\tilde{\xi}^*_i$ and $\tilde{\eta}^*_i$)
    for  the computation of $\eta_i'$ and $\xi_i'$ (resp. $\eta_i''$ and $\xi_i''$)
    in \eqref{V2-xi} and \eqref{V2-eta}.
    We show that $\eta^*_i=\tilde{\xi}_i^*$, $\xi^*_i=\tilde{\eta}_i^*$,
    and $\gamma''_i=\gamma_i$ ($i\geq 2$) in the following way.

    Using $g_i$ and $g_i'$ we determine $\xi^*_i$  and $\eta^*_i$
    by (4.3) and (4.4), and then  characterize $i\in\Cyc(s,\gamma)$ and $i\in\Cyc(s',\gamma')$ 
    by the equation  $\xi_i=\xi^*_i$ 
   and  $\xi'_i=\eta^*_i$. This yields 
 $\gamma_i'=(\xi_i',\eta_i')$ by 
    (4.3) and (4.4).
    On the other hand,
    using $g_i'$ we get the index $\tilde{\xi}_i^*$ by (4.3)  and identify $i\in\Cyc(s',\gamma')$
    by the equation $\xi'_i=\tilde{\xi_i}^*$.
    Thus  $\eta^*_i=\tilde{\xi}_i^*$.				
    
    Next we define $\eta_i''$ by (4.3)
    using the index
    $\tilde{\xi}_i^*=\eta^*_i$.	
    \begin{align}
    \eta_i''=\begin{cases}
    1&\text{if $\xi'_i=\tilde{\xi}_i^*$}\\
    \xi'_i&\text{if $\xi'_i>\tilde{\xi}_i^*$}\\
    \xi'_i+1&\text{if $1\leq \xi'_i<\tilde{\xi}_i^*$}.
    \end{cases}
    \end{align}
    Comparing (4.5) with (4.4), we have $\eta_i''=\eta_i$.
    Now,  we know that $i\in \Cyc(s'',\gamma'')$ in $g_i''$
	if and only if  $\xi''_i=\tilde{\eta}_i^*$,
	and with $\eta''_i$  and $\gamma''_1,\ldots,\gamma''_{i-1}$ we can construct the edges 	
	$i\to v_1', v_1\to v_2', \ldots,v_{s-1}\to v_s'$ 
	in $g''_i$.	
	By induction hypothesis 
	$\gamma''_j=\gamma_j$ for $1\leq j\leq i-1$
    and $\eta''_i=\eta_i$.
	So 
    $\tilde{\eta}_i^*=\xi^*_i$.  Finally
   \begin{align}\label{V2-eta}
   \xi_i''=\begin{cases}
   \tilde{\eta}_i^*&\text{if $\eta'_i=1$}\\
   \eta'_i&\text{if $\eta'_i>\tilde{\eta}_i^*$}\\
   \eta'_i-1&\text{if $1<\eta'_i\leq \tilde{\eta}_i^*$}.
   \end{cases}
   \end{align}
Comparing  (4.6) with (4.3), we have $\xi_i''=\xi_i$.
Thus we complete the proof.	
	\end{proof}
\begin{lem} \label{rho2:properties} 
For $(s,\gamma)\in \L_n$, if $\rho_2(s,\gamma)=(s',\gamma')$, then
	\begin{equation*}
	(\Cyc ,\Arecp ,\Exc ,\Rar)(s',\gamma')
	=(\Arecp,\Cyc ,\Exc,\Rar)(s,\gamma) .  
	\end{equation*}	      
\end{lem}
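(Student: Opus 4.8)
The plan is to prove Lemma~\ref{rho2:properties} by tracking each of the four set-valued statistics separately through the construction of $\rho_2$ in \S~\ref{def:rho2}, much as was done for $\rho_1$ in Lemma~\ref{rho1:properties}. First I would record the structural facts that are immediate from the definition: the step types satisfy $s_i=s_i'$ whenever $s_i\in\{\su,\la\}$, the transitions in (iii)--(iv) swap the blocks $\{i:s_i=\lb\}$ and $\{i:s_i=\lc, \eta_i>1\}$ while fixing $\{i:s_i=\lc,\eta_i=1\}$, and the $\sd$-steps in (v) are mapped to $\sd$-steps. In particular $\Excp$ and $\Excl$ depend only on the positions of $\su$ and $\la$ steps, which are untouched, so $\Exc(s',\gamma')=\Exc(s,\gamma)$; and $\Rar$ is exactly the set of $\lc$-steps with $\eta_i=1$, which by (iv) is preserved, giving $\Rar(s',\gamma')=\Rar(s,\gamma)$. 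These two are the easy coordinates.

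The substance is the swap $\Cyc\leftrightarrow\Arecp$. For the $\lc$/$\lb$ part this is the same argument as in Lemma~\ref{rho1:properties}(2): by \eqref{def:arecp} an index $i$ with $s_i\in\{\lb,\lc\}$ lies in $\Arecp(s,\gamma)$ iff $\eta_i=1$, and by (iii)--(iv) this happens iff $(s_i',\gamma_i')=(\lc,(\Delta,h_{i-1}+1))$, i.e.\ iff there is an edge $i\to i'$ in $g_i'$, which by \eqref{def:cyc} is precisely the condition $i\in\Cyc(s',\gamma')$ for that index. The real work is the $\sd$-part of (v): I need to show that for a descent step, $i\in\Arecp(s,\gamma)$ (i.e.\ $\eta_i=1$) if and only if $i\in\Cyc(s',\gamma')$ (i.e.\ in $g_i'$ there is a chain $i\to i_1', i_1\to i_2',\ldots,i_k\to i'$). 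The definition of $\eta_i'$ in \eqref{V2-xi} is rigged exactly so that, reading it in $g_i'$, the chain starting at $i$ in $g_i'$ closes up on $i'$ precisely when $\xi_i'=\eta_i^*$ (the position of the free top vertex reached by the $g'$-chain), and one checks from \eqref{V2-eta} that $\xi_i'=\eta_i^*$ holds iff $\eta_i=1$. Conversely when $\eta_i>1$ the chain in $g_i'$ terminates at a genuinely vacant top vertex, so $i\notin\Cyc(s',\gamma')$. Symmetrically one uses \eqref{V2-xi}--\eqref{V2-eta} read in $g_i$ to see that $i\in\Cyc(s,\gamma)$ (chain in $g_i$ closing on $i'$, i.e.\ $\xi_i=\xi_i^*$) corresponds under $\rho_2$ to $\eta_i'=1$, i.e.\ $i\in\Arecp(s',\gamma')$. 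This simultaneous bookkeeping of the two bipartite digraphs $g_i$ and $g_i'$ — and the fact, noted in the proof of Lemma~11, that the auxiliary indices $\xi_i^*,\eta_i^*$ computed from $g_i$ and $g_i'$ are the ones that detect cycle-closure — is the heart of the matter.

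I would then organize the write-up exactly parallel to the proof of Lemma~\ref{rho1:properties}: a preliminary paragraph fixing the step-type correspondence and the equalities $\Exc(s',\gamma')=\Exc(s,\gamma)$, $\Rar(s',\gamma')=\Rar(s,\gamma)$; then item (2) handling $\Arecp\to\Cyc$ for the $\lc,\lb$ steps by the transition rules (iii)--(iv) and for the $\sd$ steps by the chain argument above together with the equivalence $\xi_i'=\eta_i^*\iff\eta_i=1$; then item (3) handling $\Cyc\to\Arecp$ symmetrically, using that $i\in\Cyc(s,\gamma)$ for a $\sd$-step means $\xi_i=\xi_i^*$ in $g_i$, which by \eqref{V2-xi} forces $\eta_i'=1$, hence $i\in\Arecp(s',\gamma')$, and for $\lb,\lc$-steps the reverse reading of (iii)--(iv). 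A clean way to phrase the chain lemma I need is: for a $\sd$-step at $i$, following successor edges from $i$ inside $g_i$ (resp.\ $g_i'$) either reaches $i'$ or reaches a vacant bottom vertex, and the former occurs iff the chosen top-vertex index equals the distinguished index $\xi_i^*$ (resp.\ $\eta_i^*$) — this is exactly Lemma~\ref{cycle pattern} applied to the restriction graphs. The main obstacle is the potential circularity between "which chain closes up" and "which index $\rho_2$ assigns", but this was already resolved in the proof of Lemma~11 (the auxiliary indices agree: $\eta_i^*=\tilde\xi_i^*$ etc.), so I would invoke that rather than re-derive it. Once these chain equivalences are in place, each of the four coordinate equalities follows by a short case check, and composing with Lemma~\ref{theta:properties} (which moves the $\L_n$-statistics to the corresponding permutation statistics) and Lemma~11 (involutivity, hence bijectivity) yields Theorem~2.
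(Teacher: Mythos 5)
Your proposal is correct and follows essentially the same route as the paper's proof: the same preliminary step-type bookkeeping disposes of $\Exc$ and $\Rar$, and the $\Cyc\leftrightarrow\Arecp$ swap is established by exactly the same chain-closure reading of the rules (iii)--(v) in the bipartite digraphs $g_i$ and $g_i'$. The only (harmless) difference is that the paper proves just $\Arecp(s',\gamma')=\Cyc(s,\gamma)$ directly and then gets $\Cyc(s',\gamma')=\Arecp(s,\gamma)$ for free from the involutivity of $\rho_2$ (Lemma~11), whereas you verify both chain equivalences explicitly.
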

\begin{proof} By definition of $\rho_2$ in \S~\ref{def:rho2}, if 
	$\rho_2(s_i,\gamma_i)=(s_i',\gamma_i')$ for $1\leq i\leq n$, then 
	$s_i=s_i'$ for  $s_i\in \{\su, \sd, \la\}$ and 
		$
		\{i: s_i=\lb\,\;\textrm{or}\,\;\lc\}=\{i: s'_i=\lb\,\;\textrm{or}\,\;\lc\}
		$.
	\begin{enumerate}
		\item 
		As $\Excp(s,\gamma)=\{i: s_i=\su\;\text{or}\; \la\}$ and 
		$\Excl(s,\gamma)=\{i: s_i=\sd\;\text{or}\; \la\}$, see \eqref{def:excpl}, we have 
		$\Exc (s',\gamma')=\Exc (s,\gamma)$.		
		\item %
		Recall that  $\Cyc(s,\gamma)=\{i: s_i=\lc \;\text{or}\; s_i=\sd \; \text{with}\; i\to i_{1}', i_{1}\to i_{2}', \ldots,i_{k}\to i'\; \text{in}\; g_i\}$ and $\Arecp (s',\gamma')=\{i: s'_i\in \{\sd,  \lb, \lc\}\; \text{and}\; \eta'_i=1\}$, see \eqref{def:arecp}-\eqref{def:cyc}. Now,
		\begin{itemize}
		    \item the case
			$(s_i,\gamma_i)=(\lc,(\Delta,h_{i-1}+1))$ means 
			 there is an edge $i\to i'$ in $g_i$, which is equivalent to  
			$(s'_i,\gamma'_i)=(\lb,(\Delta,1))$ 
			or $(\lc,(\Delta,1))$ by \S~\ref{def:rho2} (iv).
			\item
			the case $s_i=\sd$ with 
			$i\to v_1', v_1\to v_2', \ldots,v_{s-1}\to v_s',v_s\to i'$ in $g_i$ is equivalent to $s'_i=\sd$,
			$\eta'_i=1$
			by \S~\ref{def:rho2} (v).
		\end{itemize}
		Thus $\Arecp (s',\gamma')=\Cyc (s,\gamma)$.
		As $\rho_2$ is an involution from $\L_n$ onto itself, we 
		derive immediately that   $\Cyc(s',\gamma')=\Arecp(s,\gamma)$.
		\item Recall that 
		$\Rar(s,\gamma):=\{i: s_i=\lc \; \text{and}\;\eta_i=1 \}$.
		By  \S~\ref{def:rho2} (iv),
		$(s_i,\gamma_i)=(\lc,(\Delta,1))$
		if and only if $(s'_i,\gamma'_i)=(\lc,(\Delta,1))$. Hence  $\Rar (s',\gamma')=\Rar (s,\gamma)$.
		
	\end{enumerate}
\end{proof}

Let $\Phi:=\theta^{-1}\circ\rho_1\circ\theta$. 
By Lemmas~\ref{theta:properties} and \ref{rho2:properties}  we see that 
$\Phi$ is the desired bijection from $\SS_n$ onto itself for Theorem~2.

%


\begin{figure}[t]
	\begin{tikzpicture}[scale=0.8]
	\node[above] at (1,0.5) {$1$};
	\node[above] at (2,0.5) {$2$};
	\node[above] at (3,0.5) {$3$};
	\node[above] at (4,0.5) {$4$};
	\node[above] at (5,0.5) {$5$};
	\node[above] at (6,0.5) {$6$};
	\node[above] at (7,0.5) {$7$};
	\node[above] at (8,0.5) {$8$};
	\node[above] at (9,0.5) {$9$};
	\node[above] at (10,0.5) {$10$};
	\node[above] at (11,0.5) {$11$};
	\node[above] at (12,0.5) {$12$};
	\node[above] at (13,0.5) {$13$};
	\node[above] at (14,0.5) {$14$};
	\node[above] at (15,0.5) {$15$};
	\node[above] at (16,0.5) {$16$};
	\node[above] at (17,0.5) {$17$};
	\fill (1,0.5) circle (2.5pt);
	\fill (2,0.5) circle (2.5pt);
	\fill (3,0.5) circle (2.5pt);
	\fill (4,0.5) circle (2.5pt);
	\fill (5,0.5) circle (2.5pt);
	\fill (6,0.5) circle (2.5pt);
	\fill (7,0.5) circle (2.5pt);
	\fill (8,0.5) circle (2.5pt);
	\fill (9,0.5) circle (2.5pt);
	\fill (10,0.5) circle (2.5pt);
	\fill (11,0.5) circle (2.5pt);
	\fill (12,0.5) circle (2.5pt);
	\fill (13,0.5) circle (2.5pt);
	\fill (14,0.5) circle (2.5pt);
	\fill (15,0.5) circle (2.5pt);
	\fill (16,0.5) circle (2.5pt);
	\fill (17,0.5) circle (2.5pt);
	\fill (1,-1) circle (2.5pt);
	\fill (2,-1) circle (2.5pt);
	\fill (3,-1) circle (2.5pt);
	\fill (4,-1) circle (2.5pt);
	\fill (5,-1) circle (2.5pt);
	\fill (6,-1) circle (2.5pt);
	\fill (7,-1) circle (2.5pt);
	\fill (8,-1) circle (2.5pt);
	\fill (9,-1) circle (2.5pt);
	\fill (10,-1) circle (2.5pt);
	\fill (11,-1) circle (2.5pt);
	\fill (12,-1) circle (2.5pt);
	\fill (13,-1) circle (2.5pt);
	\fill (14,-1) circle (2.5pt);
	\fill (15,-1) circle (2.5pt);
	\fill (16,-1) circle (2.5pt);
	\fill (17,-1) circle (2.5pt);
	\node[below] at (1,-1) {$1'$};
	\node[below] at (2,-1) {$2'$};
	\node[below] at (3,-1) {$3'$};
	\node[below] at (4,-1) {$4'$};
	\node[below] at (5,-1) {$5'$};
	\node[below] at (6,-1) {$6'$};
	\node[below] at (7,-1) {$7'$};
	\node[below] at (8,-1) {$8'$};
	\node[below] at (9,-1) {$9'$};
	\node[below] at (10,-1) {$10'$};
	\node[below] at (11,-1) {$11'$};
	\node[below] at (12,-1) {$12'$};
	\node[below] at (13,-1) {$13'$};
	\node[below] at (14,-1) {$14'$};
	\node[below] at (15,-1) {$15'$};
	\node[below] at (16,-1) {$16'$};
	\node[below] at (17,-1) {$17'$};
	\draw [>=stealth,  thick,  ->](1,0.5)--(4,-1);
	\draw [>=stealth,  thick,  ->](2,0.5)--(11,-1);
	\draw [>=stealth,  thick,  ->](3,0.5)--(2,-1);
	\draw [>=stealth,  thick,  ->](4,0.5)--(9,-1);
	\draw [>=stealth,  thick,  ->](5,0.5)--(1,-1);
	\draw [>=stealth,  thick,  ->](6,0.5)--(10,-1);
	\draw [>=stealth,  thick,  ->](7,0.5)--(7,-1);
	\draw [>=stealth,  thick,  ->](8,0.5)--(8,-1);
	\draw [>=stealth,  thick,  ->](9,0.5)--(5,-1);
	\draw [>=stealth,  thick,  ->](10,0.5)--(3,-1);
	\draw [>=stealth,  thick,  ->](11,0.5)--(6,-1);
	\draw [>=stealth,  thick,  ->] (12,0.5)--(12,-1);
	\draw [>=stealth,  thick,  ->](13,0.5)--(17,-1);
	\draw [>=stealth,  thick,  ->](14,0.5)--(16,-1);
	\draw [>=stealth,  thick,  ->](15,0.5)--(15,-1);
	\draw [>=stealth,  thick,  ->](16,0.5)--(14,-1);
	\draw [>=stealth,  thick,  ->][->] (17,0.5)--(13,-1);
	\end{tikzpicture}
	\caption{\small  The  graph of $\Phi(\sigma)=(1,4,9,5)(7)(8)(2,11,6,10,3)(12)(13,17)(14,16)(15)$ }\label{graph-g2}
\end{figure}
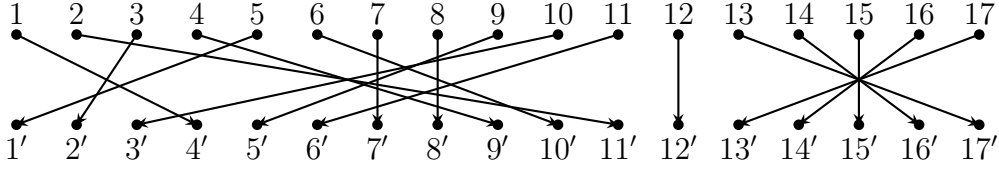

\begin{figure}[t]
	\begin{center}
		\begin{tikzpicture}[scale=0.9]
		\draw[step=1cm, gray, thick,dotted] (0, 0) grid (17,4);
		
		\draw[black] (0, 0)--(1, 1)--(2, 2)--(3, 2)--(4, 2)--(5, 2)--(6, 3)--(7, 3)--(8, 3)--(9, 2)--(10, 1)--(11, 0)--(12, 0)--(13, 1)--(14, 2)--(15, 2)--(16,1)--(17,0);
		\draw[black] (0,0) node {$\bullet$};
		\draw[black] (1,1) node {$\bullet$};
		\draw[black] (2,2) node {$\bullet$};
		\draw[black] (3,2) node {$\bullet$};
		\draw[black] (4,2) node {$\bullet$};
		\draw[black] (5,2) node {$\bullet$};
		\draw[black] (6,3) node {$\bullet$};
		\draw[black] (7,3) node {$\bullet$};
		\draw[black] (8,3) node {$\bullet$};
		\draw[black] (9,2) node {$\bullet$};
		\draw[black] (10,1) node {$\bullet$};
		\draw[black] (11,0) node {$\bullet$};
		\draw[black] (12,0) node {$\bullet$};
		\draw[black] (13,1) node {$\bullet$};
		\draw[black] (14,2) node {$\bullet$};
		\draw[black] (15,2) node {$\bullet$};
		\draw[black] (16,1) node {$\bullet$};
		\draw[black] (17,0) node {$\bullet$};
		
		\draw [-,thin, blue](3,2)--(4,2);
		\draw [-,thin, black](2,2)--(3,2);
		\draw [-,thin, red](6,3)--(7,3);
		\draw [-,thin, red](7,3)--(8,3);
		\draw [-,thin, red](14,2)--(15,2);
		\draw [-,thin, black](4,2)--(5,2);
		\draw [-,thin, red](11,0)--(12,0);
		
		\tiny{
			\draw[black] (0.5,-0.5) node {$(\Delta,\Delta)$};
			\draw[black] (1.5,-0.5) node {$(\Delta,\Delta)$};
			\draw[black] (2.5,-0.5) node {$(\Delta,2)$};
			\draw[black] (3.5,-0.5) node {$(1,\Delta)$};
			\draw[black] (4.5,-0.5) node {$(\Delta,1)$};
			\draw[black] (5.5,-0.5) node {$(\Delta,\Delta)$};
			\draw[black] (6.5,-0.5) node {$(\Delta,4)$};
			\draw[black] (7.5,-0.5) node {$(\Delta,4)$};
			\draw[black] (8.5,-0.5) node {$(2,2)$};
			\draw[black] (9.5,-0.5) node {$(2,1)$};
			\draw[black] (10.5,-0.5) node {$(1,1)$};
			\draw[black] (11.5,-0.5) node {$(\Delta,1)$};
			\draw[black] (12.5,-0.5) node {$(\Delta,\Delta)$};
			\draw[black] (13.5,-0.5) node {$(\Delta,\Delta)$};
			\draw[black] (14.5,-0.5) node {$(\Delta,3)$};
			\draw[black] (15.5,-0.5) node {$(2,2)$};
			\draw[black] (16.5,-0.5) node {$(1,1)$};
			
			\draw[black] (2.5,2.5) node {$L_b$};
			\draw[black] (3.5,2.5) node {$L_a$};
			\draw[black] (4.5,2.5) node {$L_b$};
			\draw[black] (6.5,3.5) node {$L_c$};
			\draw[black] (7.5,3.5) node {$L_c$};
			\draw[black] (11.5,0.5) node {$L_c$};
			\draw[black] (14.5,2.5) node {$L_c$};
		}	
		\end{tikzpicture}
		\caption{\small The Laguerre history of  $\Phi(\sigma)$}
	\end{center}
\end{figure}  

\begin{ex}  
For our running example $\sigma$,
we get  $\tau:=\Phi(\sigma)$ with
$$\tau=\left(%
\begin{array}{cccccccccccccccccc}
1 & 2 & 3 & 4 & 5 & 6 & 7 & 8 & 9 & 10 & 11 & 12 & 13 & 14 & 15 & 16 & 17 \\
4 & 11 & 2 & 9 & 1 & 10 & 7 & 8 & 5 & 3& 6 & 12 & 17 & 16 & 15 & 14 & 13  \\
\end{array}%
\right).$$
Thus
\begin{align*}
\Rec\tau&=(\{1, \,2, \,6, \,12, \,13\},\,
\{4, \,10, \,11, \,12, \,17\})\\
\Arec\tau&=(\{
5, \,10, \,11, \,12, \,17\},\,
\{1, \,3, \,6, \,12, \,13\})\\
\Erec\tau&=(\{1, \,2, \,6, \,13\},\,
\{4, \,10, \,11, \,17\})\\
\Exc\tau&=(\{
1, \,2, \,4, \,6, \,13, \,14\},\, 
\{4, \,9, \,10, \,11, \,16, \,17\})
\end{align*}
and $\Rar\tau=\{12\}$.
Also, the cycle decomposition of $\tau$   is 
\begin{equation*}
\tau=(1,4,9,5)\,\,(2,11, 6,10,3)\, \,(7)\,(8)\,(12)\,(13,17)\,(14,16)\,(15).
\end{equation*}
Thus $\Fix\tau=\{7,8,12,15\}$, 
$\Cyc\tau=\{7,8,9,11,12,15,16, 17\}$, 
$\Cpeak\tau=\{9,10,11,16,17\}$,
$\Cval\tau=\{1,2,6,13,14\}$,
$\Cdrise\tau=\{4\}$, and 
$\Cdfall\tau=\{3,5\}$.


\end{ex}

\section{Acknowledgements}
The first author was 
supported by the \emph{China Scholarship Council}.
This work was done during  
his  visit  at  Universit\'e
Claude Bernard Lyon 1 in 2018-2019.


\begin{thebibliography}{CCDG}
\bibitem[BV17]{BV17}
J.-L. Baril, V. Vajnovszki, A permutation code preserving a double Eulerian bistatistic, Discrete Appl. Math. 224 (2017) 9--15.
\bibitem[Bi93]{Bi93}
P. Biane,  Permutations suivant le type d'exc\'edance 
et le nombre d'inversions et interpr\'etation combinatoire d'une 
fraction continue de Heine,
European J. Combin. 14 (1993), no. 4, 277--284.
 \bibitem[CSZ97]{CSZ97}
R.~J. Clarke, E.~Steingr\'{\i}msson, and J.~Zeng, New
  {E}uler-{M}ahonian statistics on permutations and words, Adv. in Appl. Math.
  \textbf{18} (1997), no.~3, 237--270.
  \bibitem[CO09]{CO09} R. Cori, 
Indecomposable permutations, hypermaps and labeled Dyck paths, 
J. Combin. Theory Ser. A 116 (2009), no. 8, 1326--1343.
\bibitem[DV94]{DV94}
A. de M\'edicis,  G. X. Viennot,  Moments des $q$-polyn\^omes de Laguerre et la bijection de Foata-Zeilberger.  Adv. in Appl. Math. 15 (1994), no. 3, 262--304.
\bibitem[FH09]{FH09} D. Foata and  Guo-Niu, Han,  New permutation coding and equidistribution of set-valued statistics. Theoret. Comput. Sci. 410 (2009), 
no.~38--40, 3743--3750.
\bibitem[Han19]{Han19} G. N. Han, private communication, January 2019.
\bibitem[KL18]{KL18} D.-S., Kim and Z. Lin,
A sextuple equidistribution arising in pattern avoidance. 
J. Combin. Theory Ser. A 155 (2018), 267--286. 
\bibitem[PO14]{PO14} S. Poznanovi\'c,  The sorting index and equidistribution of set-valued statistics over restricted permutations. J. Combin. Theory Ser. A 125 (2014), 254--272.
\bibitem[SZ19]{SZ19} A. Sokal and J. Zeng, Some multivariate master polynomials for permutations, set partitions, and perfect matchings, and their continued fractions, in preparation,  2019.

 
\end{thebibliography}
\end{document}